\documentclass[a4paper,oneside,11pt, reqno]{amsart}
\usepackage{graphicx}
\usepackage{float}
\usepackage{pstricks}
\usepackage{amscd} 
\usepackage{amsmath}
\usepackage{amsxtra}
\usepackage{tikz}
\usepackage{tikz-cd}
\usepackage[T1]{fontenc}
\usepackage[utf8]{inputenc}
\usepackage{lipsum}
\usepackage{tikz}
\usetikzlibrary{arrows}
\usetikzlibrary{calc}
\usepackage{hyperref}

\usepackage{amsfonts,amssymb,amsmath,amsthm}
\usepackage{url}
\usepackage{enumerate}
\theoremstyle{plain}
\newtheorem{theorem}{Theorem}[section]

\newtheorem{lemma}[theorem]{Lemma}
\newtheorem{proposition}[theorem]{Proposition}
\newtheorem{corollary}[theorem]{Corollary}

\theoremstyle{definition}
\newtheorem{definition}[theorem]{Definition}

\newtheorem{example}[theorem]{Example}

\theoremstyle{remark}
\newtheorem{remark}[theorem]{Remark}

\numberwithin{equation}{section}

\newlength{\struh}
\newlength{\textminustop}
\usepackage{mathrsfs}
\usepackage{epsfig}
\usepackage[active]{srcltx}

\newcommand{\ncom}{\newcommand}
\ncom{\bq}{\begin{equation}}
\ncom{\eq}{\end{equation}}
\ncom{\beqn}{\begin{eqnarray*}}
\ncom{\eeqn}{\end{eqnarray*}}
\ncom{\beq}{\begin{eqnarray}}
\ncom{\eeq}{\end{eqnarray}}
\ncom{\nno}{\nonumber}
\ncom{\rar}{\rightarrow}
\ncom{\Rar}{\Rightarrow}
\ncom{\noin}{\noindent}
\ncom{\bc}{\begin{centre}}
\ncom{\ec}{\end{centre}}
\ncom{\sz}{\scriptsize}
\ncom{\rf}{\ref}
\ncom{\sgm}{\sigma}
\ncom{\Sgm}{\Sigma}
\ncom{\dt}{\delta}
\ncom{\Dt}{Delta}
\ncom{\lmd}{\lambda}
\ncom{\Lmd}{\Lambda}
\ncom{\eps}{\epsilon}
\ncom{\pcc}{\stackrel{P}{>}}
\ncom{\dist}{{\rm\,dist}}
\usepackage{cite}
\ncom{\sspan}{{\rm\,span}}
\ncom{\im}{{\rm Im\,}}
\ncom{\sgn}{{\rm sgn\,}}
\ncom{\ba}{\begin{array}}
\ncom{\ea}{\end{array}}
\ncom{\eop}{\hfill{{\rule{2.5mm}{2.5mm}}}}
\ncom{\eoe}{\hfill{{\rule{1.5mm}{1.5mm}}}}
\ncom{\eof}{\hfill{{\rule{1.5mm}{1.5mm}}}}
\ncom{\hone}{\mbox{\hspace{1em}}}
\ncom{\htwo}{\mbox{\hspace{2em}}}
\ncom{\hthree}{\mbox{\hspace{3em}}}
\ncom{\hfour}{\mbox{\hspace{4em}}}
\ncom{\hsev}{\mbox{\hspace{7em}}}
\ncom{\vone}{\vskip 2ex}
\ncom{\vtwo}{\vskip 4ex}
\ncom{\vonee}{\vskip 1.5ex}
\ncom{\vthree}{\vskip 6ex}
\ncom{\vfour}{\vspace*{8ex}}
\ncom{\norm}{\|\;\;\|}
\ncom{\integ}[4]{\int_{#1}^{#2}\,{#3}\,d{#4}}
\ncom{\inp}[2]{\langle{#1},\,{#2} \rangle}
\ncom{\Inp}[2]{\Langle{#1},\,{#2} \Langle}
\ncom{\vspan}[1]{{{\rm\,span}\#1 \}}}
\ncom{\dm}[1]{\displaystyle {#1}}

\keywords{Commuting isometric pair, core operator, Hartogs triangle, Hardy space}

\subjclass[2020]{Primary 47A13, 47A65; Secondary 46E22, 46E40, 30H10}

\begin{document}
\title[The Hartogs triangle core operator]{pairs of Commuting Isometries via new Core Operator}

\author[S. Jain]{Shubham Jain}
\address [S. Jain]{Indian Statistical Institute
Statistics and Mathematics Unit,
8th Mile, Mysore Road,
RVCE Post,
Bangalore, 560059,
Karnataka, India}
\email{shubhamjain\_pd@isibang.ac.in, shubhamjain4343@gmail.com} 

\begin{abstract}
We study commuting isometric pairs $(V_1,V_2)$ that admit a factorization of the form $(V_2V_3,\,V_2)$ for some isometry $V_3$. To investigate this class, we introduce the notion of core operator associated with a commuting isometric pair $(V_1, V_2):$
\begin{equation*}
H(V_1, V_2):=V_2V_2^*-V_1V_1^*-V_2^2V_2^{*2}+V_1V_2V_1^*V_2^*.
\end{equation*}
This core operator is closely associated with the domain Hartogs triangle. In particular, we establish a model for pure commuting isometric pairs satisfying
$H(V_1,V_2)\geq 0$ via the coordinate multiplication operators on a
vector-valued Hardy space over the Hartogs triangle. We also investigate the properties of the core operator and its applications to commuting isometric pairs.
\end{abstract}
\maketitle
\section{Introduction}
A fundamental problem in Hilbert space operator theory is the classification and representation of a tuple of commuting isometries. For a single isometry, the solution is complete and explicit. Suppose $V$ is an isometry acting on a complex separable Hilbert space $\mathcal{H}$. Then there exist another Hilbert space $\mathcal{H}_u$ and a unitary operator $U$ on $\mathcal{H}_u$ such that $V$ is unitarily equivalent to the operator
$$
\begin{bmatrix}
S \otimes I_{\mathcal{W}} & 0 \\
0 & U
\end{bmatrix}
$$
acting on $(\ell^2(\mathbb{Z}_+) \otimes \mathcal{W}) \oplus \mathcal{H}_u$. Here, $\mathcal{W} = \operatorname{ker} V^*$ denotes the wandering subspace associated with $V$, and $S$ represents the unilateral shift on $\ell^2(\mathbb{Z}_+)$. This decomposition is a classical result attributed to von Neumann and Wold (see \cite{H1961, N1930, Wold}). However, the structure of a pair of commuting isometries is still not fully known. In the past few decades, many papers were devoted to this subject (see, e.g. \cite{BCG2024, BRV2022, AKM2012, BDF2006, Bu2019, BKPS2017, BKS2015,GG2004, KO1999, MSS2019, Popovici I, Popovici, Slocinski} and the references therein).

In the Hardy space over the polydisc and Drury-Arveson space on the unit ball, the core operator associated with these domains played an essential role in studying the submodules (see \cite{GY2004, HQY2015, MSS2019, BRV2022, BCG2024, DPSS2024, G2004, A1998} and references therein). In the bidisc setting, Guo and Yang \cite{GY2004} introduced the bidisc core operator as an invariant associated with submodules, and subsequently He, Qin, and Yang \cite{HQY2015} showed that it is a useful tool for studying the pairs of commuting isometries. More recently, bidisc core operators have also been used to investigate spectral properties of commuting isometries and related operator-theoretic questions (see, e.g. \cite{BRV2022, BCG2024} and references therein). The \textit{defect operator or  bidisc core operator} $C(V_1, V_2)$ of a pair $(V_1, V_2)$ of commuting isometries is defined as 
\beq \label{bidisc core operator}
C(V_1, V_2):=I-V_1V_1^*-V_2V_2^*+V_1V_2V_1^*V_2^*.
\eeq 
Let $(V_1,V_2)$ be a pair of commuting isometries on a Hilbert space $\mathcal H$. By Douglas's range inclusion theorem \cite{Douglas}, there exists an isometry $V_3$ such that
\begin{align}\label{V_1=V_2V_3}
    (V_1,V_2)=(V_2V_3,V_2) \mbox{ if and only if } \operatorname{ran}V_1\subseteq \operatorname{ran}V_2.
\end{align}
A natural example of such an isometric pair is given by the pair of multiplication operators by the coordinate functions on the Hardy space of the Hartogs triangle (see \cite[Equation~(4.11)]{CJP2023}). To establish an equivalent criterion for the existence of such a factorization (see \eqref{V_1=V_2V_3}), we introduce and study the core operator associated with the Hartogs triangle. Recall that the domain Hartogs triangle is given by $$\triangle_H=\{(z,w)\in \mathbb C^2:|z|<|w|<1\}.$$
\begin{definition}
For a pair $(V_1, V_2)$ of commuting isometries on a Hilbert space $\mathcal{H},$ we define the {\it core operator associated with the Hartogs triangle} $H(V_1, V_2)$ of $(V_1, V_2)$ as 
\beq \label{Hartogs triangle core operator}
H(V_1, V_2):=V_2V_2^*-V_1V_1^*-V_2^2V_2^{*2}+V_1V_2V_1^*V_2^*.
\eeq
\end{definition}

Note that $H(V_1, V_2)$ is a self-adjoint operator on $\mathcal{H}.$ Also, $H(V_1, V_2)=0$ on $V_1V_2^2\mathcal{H}.$ Also, note that $V_2V_2^*-V_1V_1^*$ may not be a projection. In fact,
\beqn
V_2V_2^*-V_1V_1^*=(I-V_1V_1^*)-(I- V_2V_2^*)=P_{\mathcal{W}(V_1)}-P_{\mathcal{W}(V_2)},
\eeqn
which is not a projection unless $$P_{\mathcal{W}(V_1)}P_{\mathcal{W}(V_2)}=P_{\mathcal{W}(V_2)}P_{\mathcal{W}(V_1)}=P_{\mathcal{W}(V_2)},$$
where $\mathcal{W}(V):=\mathcal{H} \ominus V\mathcal{H}$ denote the wandering subspace (see \cite{H1961}) for an isometry $V$ on a Hilbert space $\mathcal{H}$.
\begin{example}
    Let $M_i$ denote the operator of multiplication by the coordinate function $z_i$ for $i=1,2$ in the Hardy space $H^2(\mathbb D^2)$. Then $(M_1, M_2)$ is a pair of commuting isometries; however, the operator $M_2M_2^* - M_1M_1^*$ is not a projection. 
\end{example}
Note that $(z_1, z_2)\in \mathbb D^2$ is in $\triangle_H$ if and only if $|z_2|^2-|z_1|^2-|z_2|^4+|z_1z_2|^2>0$. From now onward, we shall refer to the operator defined in \eqref{Hartogs triangle core operator} as the \emph{Hartogs triangle core operator}, or simply the \emph{$\triangle_H$-core operator}.

Let $\mathbb N$ and $\mathbb{Z}_+$ denote the set of natural numbers and nonnegative integers, respectively. The space of bounded linear operators on a Hilbert space $\mathcal H$ is denoted by $B(\mathcal{H}).$ For a closed subspace $\mathcal{M}$ of $\mathcal{H},$ let $P_{\mathcal{M}}$ denote the orthogonal projection onto $\mathcal{M}.$ For a bounded linear operator $T \in B(\mathcal{H})$ and a closed subspace $\mathcal{M}$ of $\mathcal{H},$ we say $\mathcal{M}$ is invariant under $T$ if $T(\mathcal{M}) \subseteq \mathcal{M}$ and $\mathcal{M}$ reduces $T$ if $T(\mathcal{M}) \subseteq \mathcal{M}$ and $T^*(\mathcal{M}) \subseteq \mathcal{M}.$ For a bounded linear operator $T,$ $\operatorname{ran}T$ and $\operatorname{ker}T$ stands for range and kernel of $T.$ For a finite-dimensional subspace $\mathcal M$ of $\mathcal H,$ $\operatorname{dim} \mathcal M$ denotes the dimension of $\mathcal M$. Recall that an isometry $V$ on a Hilbert space $\mathcal H$ is said to be pure if $\displaystyle \lim_{n \rightarrow\infty}V^{*n}=0$ in the strong operator topology.

The outline of the paper is as follows. In Section \ref{S3}, we begin by proving two preliminary results (Lemmas \ref{wandering subspaces relation} and \ref{C-H relation}). We then establish the following result, which gives the relationship between the $\triangle_H$-core operator and the bidisc core operator.
\begin{theorem} \label{Hartogs bidisc core relation}
    Let $(V_1, V_2)$ be a pair of commuting isometries on a Hilbert space $\mathcal H.$ Then the following are equivalent:
    
    \begin{itemize}
       \item[$\mathrm{(i)}$]  $H(V_1, V_2) \geq 0.$ 
        \item[$\mathrm{(ii)}$] There exists an isometry $V_3$ commuting with $V_2$ such that $V_1=V_2V_3$ and $C(V_3, V_2) \geq 0.$
        \item[$\mathrm{(iii)}$] $V_1(\mathcal{W}(V_2)) \subseteq V_2(\mathcal{W}(V_2)).$
    \end{itemize} 
\end{theorem}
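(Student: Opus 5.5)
The plan is to rewrite $H(V_1,V_2)$ as a compression of an operator built from $V_2$ alone, and then read off each condition. Put $P=I-V_2V_2^*=P_{\mathcal W(V_2)}$. Since $V_1$ and $V_2$ commute and $V_2$ is an isometry, $V_1V_2V_1^*V_2^*=V_2(V_1V_1^*)V_2^*$ and $V_2^2V_2^{*2}=V_2(V_2V_2^*)V_2^*$. This gives
\[
H(V_1,V_2)=V_2\bigl(I-V_2V_2^*+V_1V_1^*\bigr)V_2^*-V_1V_1^*=V_2PV_2^*-(I-V_2V_2^*)V_1V_1^*(I-V_2V_2^*)+\text{(cross terms)}.
\]
I would organise this more cleanly by splitting $\mathcal H=\mathcal W(V_2)\oplus V_2\mathcal W(V_2)\oplus V_2^2\mathcal H$. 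On $V_2^2\mathcal H$ we have $H=V_2^2V_2^{*2}\cdot 0$ after cancellation; indeed $V_2V_2^*-V_2^2V_2^{*2}=V_2PV_2^*=P_{V_2\mathcal W(V_2)}$. Hence
\[
H(V_1,V_2)=P_{V_2\mathcal W(V_2)}-\bigl(V_1V_1^*-V_2V_1V_1^*V_2^*\bigr)=P_{V_2\mathcal W(V_2)}-V_1\,P\,V_1^*,
\]
because $V_2V_1V_1^*V_2^*=V_1V_2V_2^*V_1^*$. I expect this identity, presumably the content of Lemma \ref{C-H relation}, to be the key computation. Note that $V_1PV_1^*$ is the projection onto $V_1\mathcal W(V_2)$, since $V_1$ is an isometry.

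With this identity, (i)$\Leftrightarrow$(iii) is immediate. A difference of two projections $P_{\mathcal M}-P_{\mathcal N}$ is $\ge0$ if and only if $\mathcal N\subseteq\mathcal M$. Here $\mathcal N=V_1\mathcal W(V_2)$ and $\mathcal M=V_2\mathcal W(V_2)$, which is exactly (iii). The inclusion argument is standard: if $P_{\mathcal M}\ge P_{\mathcal N}$ and $x\in\mathcal N$, then $\|x\|^2\le\langle P_{\mathcal M}x,x\rangle$, which forces $x\in\mathcal M$.

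For (iii)$\Rightarrow$(ii), I would first show $\operatorname{ran}V_1\subseteq\operatorname{ran}V_2$. Write $\mathcal H=\mathcal W(V_2)\oplus V_2\mathcal H$. Then $V_1\mathcal W(V_2)\subseteq V_2\mathcal H$ by (iii), and $V_1V_2\mathcal H=V_2V_1\mathcal H\subseteq V_2\mathcal H$, so $V_1\mathcal H\subseteq V_2\mathcal H$. By \eqref{V_1=V_2V_3}, $V_3:=V_2^*V_1$ is an isometry with $V_1=V_2V_3$. It commutes with $V_2$, since $V_2V_3V_2=V_1V_2=V_2V_1=V_2V_2V_3$, and cancelling the injective $V_2$ gives $V_3V_2=V_2V_3$.

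For the positivity of $C(V_3,V_2)$, the same kind of computation shows that $C(V_3,V_2)=P-V_3PV_3^*$, because $V_3V_2V_3^*V_2^*=V_3V_2V_2^*V_3^*$. This is $\ge0$ if and only if $V_3\mathcal W(V_2)\subseteq\mathcal W(V_2)$. Applying the injective isometry $V_2$, this inclusion is equivalent to $V_1\mathcal W(V_2)\subseteq V_2\mathcal W(V_2)$, which is (iii). The converse (ii)$\Rightarrow$(iii) follows by running the same equivalence backwards.

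The main obstacle is getting the two compression identities right, that is, checking that $V_1PV_1^*$ and $V_3PV_3^*$ are genuinely the relevant terms. Both rest on the commutativity manipulations $V_1V_2V_1^*V_2^*=V_2V_1V_1^*V_2^*$ and the analogous one for $V_3$. I would verify each of these directly before relying on them.
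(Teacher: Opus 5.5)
Your proof is correct. For (i)$\iff$(iii) it is the paper's argument: your identity $H(V_1,V_2)=P_{V_2\mathcal W(V_2)}-V_1P_{\mathcal W(V_2)}V_1^*$ is exactly Lemma \ref{C-H relation}(ii), followed by the standard comparison of two projections.

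The difference is in how (ii) is attached. The paper proves (i)$\Rightarrow$(ii) directly. It computes $\langle H(V_1,V_2)x,x\rangle=-\|V_1^*x\|^2$ for $x\in\mathcal W(V_2)$, which gives $\ker V_2^*\subseteq\ker V_1^*$ and hence $\operatorname{ran}V_1\subseteq\operatorname{ran}V_2$. It then moves positivity back and forth with the congruences $C(V_3,V_2)=V_2^*H(V_1,V_2)V_2$ and $H(V_1,V_2)=V_2C(V_3,V_2)V_2^*$.

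You instead get the range inclusion from (iii). You then connect (ii) to (iii) through a second difference-of-projections formula, $C(V_3,V_2)=P_{\mathcal W(V_2)}-P_{V_3\mathcal W(V_2)}$, together with injectivity of $V_2$.

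Each route has an advantage. The paper's congruences are insensitive to sign, which is why they are reused verbatim for $H(V_1,V_2)\le 0$ in Theorem \ref{Hartogs bidisc core relation1}. Your formula shows that $C(V_3,V_2)\ge 0$ is the same as $V_3\mathcal W(V_2)\subseteq\mathcal W(V_2)$, i.e.\ $V_2^*V_3=V_3V_2^*$. So the double commutativity of $(V_3,V_2)$, which the paper imports from \cite[Lemma~6.2]{MSS2019} in the proof of Proposition \ref{modelthm}, follows from a one-line check.

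One aside in your write-up is false, though nothing depends on it. $H(V_1,V_2)$ need not vanish on $V_2^2\mathcal H$; only $V_2V_2^*-V_2^2V_2^{*2}$ does, while $H(V_1,V_2)$ itself vanishes on $V_1V_2^2\mathcal H$. For example, with $S$ the unilateral shift on $H^2(\mathbb D)$, taking $V_2=S$ and $V_1=S^3$ gives $H(V_1,V_2)z^3=-z^3$.
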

As an application, we obtain the following model theorem for commuting pairs of pure isometries.
\begin{proposition} \label{modelthm}
     Let $(V_1, V_2)$ be a pair of pure commuting isometries on a Hilbert space $\mathcal H$ such that $H(V_1, V_2) \geq 0.$ Then $(V_1, V_2)$ is unitarily equivalent to $(\mathscr M_{z_1}^\mathcal Z, \mathscr M_{z_2}^\mathcal Z)$ on $H^2_{\mathcal Z}(\triangle_H)$. 
\end{proposition}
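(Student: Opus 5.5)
The plan is to reduce the statement to the classical bidisc model via Theorem \ref{Hartogs bidisc core relation}, and then transport that model to the Hartogs triangle by an explicit unitary between Hardy spaces. First, since $H(V_1,V_2)\geq 0$, implication (i)$\Rightarrow$(ii) of Theorem \ref{Hartogs bidisc core relation} gives an isometry $V_3$ commuting with $V_2$ such that $V_1=V_2V_3$ and $C(V_3,V_2)\geq 0$. The product $V_3V_2=V_1$ is pure by hypothesis, and $V_2$ is pure as well.

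Next I would identify $(V_3,V_2)$ with the bidisc shift pair. For a commuting pair of isometries whose product is pure, positivity of the bidisc core operator is known to be equivalent to double commutativity, i.e. $V_3^*V_2=V_2V_3^*$; in that case $C(V_3,V_2)$ is the projection onto $\mathcal Z:=\mathcal W(V_3)\cap\mathcal W(V_2)$ (He--Qin--Yang; see also \cite{HQY2015, BKS2015}). Słociński's Wold-type decomposition for doubly commuting pairs \cite{Slocinski} then applies. Purity of $V_3V_2$ kills every summand except the one on which both isometries are shifts, so the map
\[
V_3^mV_2^k\zeta\mapsto x^my^k\zeta,\qquad \zeta\in\mathcal Z,
\]
extends to a unitary $\mathcal H\to H^2_{\mathcal Z}(\mathbb D^2)$. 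This unitary intertwines $(V_3,V_2)$ with $(M_x,M_y)$.

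The remaining step uses the structure of $H^2_{\mathcal Z}(\triangle_H)$ as in \cite{CJP2023}. I expect it to have orthonormal basis (up to fixed normalizing constants, which do not affect the argument) $\{z_1^mz_2^{n}\zeta_j : m\geq0,\ m+n\geq 0\}$, where $\{\zeta_j\}$ is an orthonormal basis of $\mathcal Z$. I would then define
\[
U(x^my^k\zeta)=z_1^mz_2^{k-m}\zeta,\qquad m,k\geq 0,
\]
which is a bijection between the two orthonormal bases, hence a unitary. One checks that $\mathscr M_{z_2}^{\mathcal Z}U=UM_y$ and
\[
\mathscr M_{z_1}^{\mathcal Z}U(x^my^k\zeta)=z_1^{m+1}z_2^{k-m}\zeta=U(x^{m+1}y^{k+1}\zeta),
\]
so that $\mathscr M_{z_1}^{\mathcal Z}U=UM_xM_y$. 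Composing the two unitaries sends $V_2\mapsto \mathscr M_{z_2}^{\mathcal Z}$ and $V_1=V_3V_2\mapsto \mathscr M_{z_1}^{\mathcal Z}$, which proves the proposition.

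The main obstacle is the middle step: justifying that $C(V_3,V_2)\geq0$ together with purity of $V_1=V_3V_2$ forces $(V_3,V_2)$ to be doubly commuting. If I cannot quote this cleanly, I would prove it directly. Positivity gives $C\geq0$, and a telescoping sum yields
\[
\sum_{m,k\geq0}V_3^mV_2^k\,C\,V_2^{*k}V_3^{*m}=I \quad\text{(strongly)},
\]
using purity of $V_3V_2$ to control the tail. This makes $\zeta\mapsto\sum x^my^k C^{1/2}\zeta$ an isometric dilation into $H^2_{\overline{\operatorname{ran}}C}(\mathbb D^2)$ intertwining $(V_3^*,V_2^*)$ with $(M_x^*,M_y^*)$. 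Surjectivity of this dilation, which yields $\mathcal Z=\overline{\operatorname{ran}}\,C$, would then follow from a wandering-subspace count using $\mathcal W(V_1)=\mathcal W(V_2)\oplus V_2\mathcal W(V_3)$. A secondary point to verify against \cite{CJP2023} is the exact index set and weights of the basis of $H^2(\triangle_H)$.
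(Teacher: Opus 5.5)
Your route is the paper's route. Theorem~\ref{Hartogs bidisc core relation} gives $V_1=V_2V_3$ with $C(V_3,V_2)\geq 0$, so $(V_3,V_2)$ is doubly commuting; this step needs no purity at all. Then come S\l{}oci\'nski's decomposition and an explicit basis map into $H^2_{\mathcal Z}(\triangle_H)$.

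The genuine gap is the sentence ``purity of $V_3V_2$ kills every summand except the one on which both isometries are shifts.'' It does not. On the S\l{}oci\'nski summand where $V_3$ is unitary and $V_2$ is a shift, $\|(V_3V_2)^{*n}h\|=\|V_3^{*n}V_2^{*n}h\|=\|V_2^{*n}h\|\to 0$. So neither purity of $V_1=V_3V_2$ nor purity of $V_2$ removes it. Your telescoping fallback fails for the same reason: purity of the product controls only the diagonal tail $(V_3V_2)^{n}(V_3V_2)^{*n}$, not $V_3^{M}V_3^{*M}$.

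No argument can close this, because the statement is false as written. Take $V_1=V_2=S_z$ on $H^2(\mathbb T)$:
\begin{itemize}
\item both are pure, and $H(V_1,V_2)=0$;
\item $V_3=I$ and $C(V_3,V_2)=0$, so your sum is $0$, not $I$;
\item $\mathcal Z=\mathcal W(V_2)\cap\mathcal W(I)=\{0\}$.
\end{itemize}
Since $V_1=V_2$ whereas $\mathscr M_{z_1}^{\mathcal Z}\neq\mathscr M_{z_2}^{\mathcal Z}$ for every $\mathcal Z\neq\{0\}$, no unitary equivalence exists. The case $\mathcal Z=\{0\}$ is excluded trivially. The paper's proof has exactly the same hole: it asserts that Theorem~\ref{Hartogs bidisc core relation} ``yields a pure isometry $V_3$'', which does not follow from the hypotheses.

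What is missing is the hypothesis that $V_3=V_2^*V_1$ is pure. This does hold in the paper's later remark on invariant subspaces of $H^2(\triangle_H)$, where $V_3$ is a restriction of multiplication by $z_1/z_2$. Without it, the correct conclusion is that $(V_1,V_2)$ is unitarily equivalent to $(\mathscr M_{z_1}^{\mathcal Z},\mathscr M_{z_2}^{\mathcal Z})\oplus(S_z\otimes W,\,S_z\otimes I_{\mathcal E})$ on $H^2_{\mathcal Z}(\triangle_H)\oplus(H^2(\mathbb T)\otimes\mathcal E)$, for some unitary $W$ on $\mathcal E$.

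With the extra hypothesis your argument works, apart from the correction you already suspected. With the paper's index set $\mathcal I$ (where $\alpha_1\geq 0$ and $\alpha_1+\alpha_2+1\geq 0$), your map $x^my^k\zeta\mapsto z_1^mz_2^{k-m}\zeta$ misses the vectors $z_1^mz_2^{-m-1}\zeta$. It is therefore only onto $\operatorname{ran}\mathscr M_{z_2}^{\mathcal Z}$. Use $z_1^mz_2^{k-m-1}\zeta$, as the paper does; your intertwining computations go through unchanged.
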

See \eqref{vectorhardy} for the definition of $H^2_{\mathcal Z}(\triangle_H)$.
We further prove that, if $\operatorname{ker}(V_i^*)$ is finite-dimensional for $i=1,2$, then $H(V_1,V_2)\geq 0$ or $H(V_1,V_2)\leq 0$ holds if and only if $H(V_1,V_2)=0$ (see Corollary \ref{wanderingfinite}). We also obtain a characterization of doubly commuting pairs of isometries in terms of the Hartogs triangle core operator (see Proposition \ref{doublychara}). We conclude the section by presenting several equivalent characterizations for $V_2$ to be unitary in the pair $(V_1,V_2)$ (see Theorem \ref{mainthmiso}).

A commuting isometric pair $(V_1, V_2)$ on a Hilbert space $\mathcal H$ is said to be completely non-unitary if the isometry $V_1V_2$ is pure, that is,
\beqn
\bigcap_{n=0}^{\infty}(V_1V_2)^k \mathcal H=\{0\}.
\eeqn
We now recall the model theorem for completely non-unitary commuting isometric pairs from \cite{BCL1978}. Let $\mathcal S$ be a separable Hilbert space and let $H^2_{\mathcal S}(\mathbb T)$ be the $\mathcal S$-valued Hardy space, that is,
\beqn
H^2_{\mathcal S}(\mathbb T):=\left\{f(z)=\sum_{j=0}^{\infty}a_jz^j : a_j \in \mathcal S, |z|=1, \|f\|^2=\sum_{j=0}^{\infty}\|a_j\|^2_{\mathcal S}<\infty \right\}.
\eeqn
Let $M_z$ denote the multiplication operator by the coordinate function $z$ in $H^2_\mathcal S(\mathbb T).$ It is easy to see that $M_z$ is a unilateral shift of multiplicity equal to the dimension of $\mathcal S.$ It is easy to see that the space $H^2_\mathcal S(\mathbb T)$ is identified with tensor product space $H^2(\mathbb T) \otimes \mathcal S.$ Let $U$ and $P$ be a unitary and an orthogonal projection on $\mathcal S,$ respectively. Then the BCL triple $(\mathcal S, U, P)$ determines a pair of isometries $(M_{\phi_1}, M_{\phi_2})$ on $H^2_\mathcal S(\mathbb T)$ by
\begin{align} \label{BCLpairss2}
 &M_{\phi_1}
=
I_{H^2(\mathbb T)}\otimes PU^*
+
S_z\otimes P^\perp U^*, \notag\\&M_{\phi_2}
=
I_{H^2(\mathbb T)}\otimes UP^\perp
+
S_z\otimes UP, 
\end{align}
where $S_z$ denotes the multiplication by the coordinate function $z$ in $H^2(\mathbb T).$ It follows that $M_{\phi_1}M_{\phi_2}=M_{\phi_2}M_{\phi_1}=M_z$ and hence $(M_{\phi_1}, M_{\phi_2})$ is completely non-unitary. Conversely, any completely non-unitary isometric pair can be realized, up to unitary equivalence, in the above form. Let $P_\mathbb C$ denote the orthogonal projection onto the subspace of constant functions in $H^2(\mathbb T).$ From now on, we refer $(M_{\phi_1}, M_{\phi_2})$ as BCL pair and $(\mathcal S, U, P)$ as BCL triple.

In Section \ref{S4}, we investigate completely non-unitary commuting isometric pairs through the Berger-Coburn-Lebow theorem. We first show that $H(M_{\phi_1},M_{\phi_2})=0$ if and only if $PUP=0$ and $U^2PU^{*2}=P$ (see Theorem \ref{pure=0}). We then establish criteria for the positive and negative definiteness of $H(M_{\phi_1},M_{\phi_2})$ in terms of the operators introduced in \eqref{ABCD} (see Propositions \ref{H>0} and \ref{H<0}). Next, we characterize the compactness of $H(M_{\phi_1},M_{\phi_2})$, showing that it is compact if and only if both $PUP$ and $U^2PU^{*2}-P$ are compact (see Theorem \ref{Compactchara}). Finally, we show that the nonzero part of compact contractive core operator $H(M_{\phi_1},M_{\phi_2})$ is unitarily equivalent to 
$$
\begin{pmatrix}
I_{1} & 0 & 0 & 0\\
0 & D & 0 & 0\\
0 & 0 & -I_{2} & 0\\
0 & 0 & 0 & -D
\end{pmatrix},
$$
where $I_1$ and $I_2$ are the identity operators and $D$ is a positive contractive diagonal operator (see Theorem \ref{structurethm}).
\section{Definiteness of Hartogs triangle core operator} \label{S3}
This section is devoted to the study of isometric pairs $(V_1, V_2)$ satisfying $H(V_1, V_2) \geq 0 $ or $H(V_1, V_2) \leq 0$, and investigate its relationship with the bidisc core operator. For a pair $(V_1, V_2)$ of commuting isometries on a Hilbert space $\mathcal{H},$ let $V_{m, n}$ denote the isometry $V_1^mV_2^n$ for $m,n \in \mathbb Z_+$. Let $\mathcal{W}_{m,n}$ denote the wandering subspace $\mathcal{W}(V_{m, n})$ for $V_{m, n}.$ The following simple observations about wandering subspaces for commuting isometries are helpful (see for $m=n=1,$ \cite[p.~6]{HQY2015}, \cite[Lemma~3.1]{MSS2019}).
\begin{lemma} \label{wandering subspaces relation}
    Let $(V_1, V_2)$ be a pair of commuting isometries on a Hilbert space $\mathcal{H}$ and $m, n \in \mathbb N.$ Then 
    \begin{itemize}
\item[$\mathrm{(i)}$] 
          $\mathcal{W}_{1,2}=\mathcal{W}_{0, 1} \oplus V_2\mathcal{W}_{1, 1}.$

          \item[$\mathrm{(ii)}$] $\mathcal{W}_{m,n}=\mathcal{W}_{m-1,n} \oplus V_{m-1, n}\mathcal{W}_{1,0}.$ 

           \item[$\mathrm{(iii)}$] $\mathcal{W}_{m,n}=\mathcal{W}_{m,n-1} \oplus V_{m, n-1}\mathcal{W}_{0,1}.$
    \end{itemize}
\end{lemma}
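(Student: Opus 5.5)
The plan is to use one general fact about isometries: if $A$ and $B$ are isometries on $\mathcal H$, then $\mathcal W(AB)=\mathcal W(A)\oplus A\,\mathcal W(B)$. To see this, recall that $P_{\mathcal W(T)}=I-TT^*$ for an isometry $T$. Then
\begin{align*}
I-ABB^*A^* = (I-AA^*) + A(I-BB^*)A^*.
\end{align*}
The two summands on the right are orthogonal projections. The first has range $\mathcal W(A)$. The second has range $A\,\mathcal W(B)$, because $A$ is an isometry, so $A P A^*$ is the projection onto $A\,\mathrm{ran}\,P$. These two ranges are orthogonal, since $A\,\mathcal W(B)\subseteq \operatorname{ran}A=\mathcal W(A)^\perp$. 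Hence the range of the sum is the orthogonal direct sum $\mathcal W(A)\oplus A\,\mathcal W(B)$, and this is exactly the range of $I-ABB^*A^*$, namely $\mathcal W(AB)$.

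Each item then follows by choosing the factorization appropriately, using commutativity of $V_1$ and $V_2$ to reorder products.

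For (i), write $V_{1,2}=V_1V_2^2=V_2\cdot(V_1V_2)$. Take $A=V_2$ and $B=V_{1,1}$. This gives $\mathcal W_{1,2}=\mathcal W_{0,1}\oplus V_2\mathcal W_{1,1}$.

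For (ii), write $V_{m,n}=V_{m-1,n}V_1$. Take $A=V_{m-1,n}$ and $B=V_1$. This gives $\mathcal W_{m,n}=\mathcal W_{m-1,n}\oplus V_{m-1,n}\mathcal W_{1,0}$.

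For (iii), write $V_{m,n}=V_{m,n-1}V_2$. Take $A=V_{m,n-1}$ and $B=V_2$. This gives $\mathcal W_{m,n}=\mathcal W_{m,n-1}\oplus V_{m,n-1}\mathcal W_{0,1}$.

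Some index conventions need to be fixed. $V_{0,0}=I$, and $\mathcal W_{0,0}=\{0\}$. With $m,n\in\mathbb N$, the indices $(m-1,n)$ and $(m,n-1)$ are in $\mathbb Z_+^2$, and the formulas remain valid there.

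The argument is essentially routine. The only point requiring care is the claim that $A(I-BB^*)A^*$ is a projection with range $A\,\mathcal W(B)$ and that this range is orthogonal to $\mathcal W(A)$. Both facts follow from $A^*A=I$.
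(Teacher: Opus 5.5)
Your proof is correct and is essentially the paper's argument in operator form. The paper checks the two inclusions elementwise, splitting $y=(I-AA^*)y+A(A^*y)$ with $A^*y\in\mathcal W(B)$ (for $A=V_2$, $V_{m-1,n}$, $V_{m,n-1}$ respectively), which is exactly the pointwise content of your identity $I-ABB^*A^*=(I-AA^*)+A(I-BB^*)A^*$; isolating the general fact $\mathcal W(AB)=\mathcal W(A)\oplus A\,\mathcal W(B)$ for arbitrary isometries handles all three items at once, with commutativity needed only to reorder the factors.
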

\begin{proof}
    $\mathrm{(i)}:$ Note that $\mathcal{W}_{0, 1} \cap V_2\mathcal{W}_{1, 1} =\{0\}.$ Let $x_1 \in \mathcal{W}_{0, 1}$ and $ x_2 \in V_2\mathcal{W}_{1, 1}.$ For $x_2 \in  V_2\mathcal{W}_{1, 1},$ there exist $x_2' \in \mathcal{W}_{1, 1}$ such that $V_2x_2'=x_2.$
    Now, $$V_1^*V_2^{*2}(x_1+x_2)=V_1^*V_2^{*2}x_1+V_1^*V_2^{*2}V_2x_2'=0.$$
    Hence, $\mathcal{W}_{0, 1} \oplus V_2\mathcal{W}_{1, 1}\subseteq \mathcal{W}_{1,2}.$ Conversely, let $y \in \mathcal{W}_{1,2}.$ Write $y=y-V_2V_2^*y+V_2V_2^*y.$ Clearly, $y-V_2V_2^*y \in \mathcal{W}_{0, 1}$ and $V_2^*y \in \mathcal{W}_{1, 1}.$ This completes the proof. 
    
    $\mathrm{(ii)}:$ Note that $\mathcal{W}_{m-1, n} \cap V_{m-1, n}\mathcal{W}_{1, 0} =\{0\}.$ Let $x_1\in \mathcal{W}_{m-1, n}$ and $x_2' \in \mathcal{W}_{1, 0}.$ Now, 
    \beqn
    V_{m, n}^*(x_1+V_{m-1, n}x_2') &=& V_1^{*m}V_2^{*n}x_1+V_1^{*m}V_2^{*n}V_{m-1, n}x_2' \\ &=& V_1^*V_1^{*m-1}V_2^{*n}x_1+V_1x_2'=0.
\eeqn
    Hence, $\mathcal{W}_{m-1, n} \oplus V_{m-1, n}\mathcal{W}_{1, 0}  \subseteq \mathcal{W}_{m, n}.$ Conversely, let $y \in \mathcal{W}_{m, n}.$ Write $y=y-V_{m-1, n}V_{m-1, n}^*y+V_{m-1, n}V_{m-1, n}^*y.$ Clearly, $y-V_{m-1, n}V_{m-1, n}^*y \in \mathcal{W}_{m-1, n}$ and $V_{m-1, n}^*y \in \mathcal{W}_{1, 0}.$ Similarly, $\mathrm{(iii)}$ follows.
\end{proof}
The following result is frequently used in this note.
\begin{lemma} \label{C-H relation}
    Let $(V_1, V_2)$ be a pair of commuting isometries on a Hilbert space $\mathcal H.$ Then
\begin{itemize}
    \item [$\mathrm{(i)}$] $H(V_1, V_2)-C(V_1, V_2)=-P_{\mathcal{W}_{0, 1}} +P_{V_2\mathcal{W}_{0, 1}}.$
    \item [$\mathrm{(ii)}$] $H(V_1, V_2)=P_{V_2\mathcal{W}_{0, 1}}-P_{V_1\mathcal{W}_{0, 1}}.$
\end{itemize} 
\end{lemma}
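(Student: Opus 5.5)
The plan is to express every term of $H(V_1,V_2)$ and $C(V_1,V_2)$ as a projection. The basic fact is that for an isometry $V$ and a closed subspace $\mathcal M$, the operator $VP_{\mathcal M}V^*$ is the orthogonal projection onto $V\mathcal M$. This holds because $VP_{\mathcal M}V^*$ is self-adjoint and idempotent, since $V^*V=I$, and its range is $V\mathcal M$. The projection onto $\mathcal{W}_{0,1}$ is $I-V_2V_2^*$.

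For (ii), the approach is to group the four terms of $H(V_1,V_2)$ in pairs:
\begin{equation*}
V_2V_2^*-V_2^2V_2^{*2}=V_2(I-V_2V_2^*)V_2^*=P_{V_2\mathcal{W}_{0,1}},
\end{equation*}
\begin{equation*}
-V_1V_1^*+V_1V_2V_1^*V_2^*=-V_1(I-V_2V_2^*)V_1^*=-P_{V_1\mathcal{W}_{0,1}}.
\end{equation*}
The second identity uses commutativity to write $V_1V_2V_1^*V_2^*=V_1V_2V_2^*V_1^*$, since $V_1^*V_2^*=(V_2V_1)^*=(V_1V_2)^*=V_2^*V_1^*$. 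Adding the two identities gives (ii).

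For (i), I will compute the difference directly from the definitions:
\begin{equation*}
H(V_1,V_2)-C(V_1,V_2)=V_2V_2^*-V_2^2V_2^{*2}-I+V_2V_2^*,
\end{equation*}
because the terms $-V_1V_1^*$ and $V_1V_2V_1^*V_2^*$ cancel. This can be rewritten as $-(I-V_2V_2^*)+(V_2V_2^*-V_2^2V_2^{*2})$. The first bracket is $P_{\mathcal{W}_{0,1}}$, and by the first display above the second bracket is $P_{V_2\mathcal{W}_{0,1}}$, which gives (i).

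There is no real obstacle. The only points needing care are the commutativity step that reorders the adjoints in the last term, and checking that $VP_{\mathcal M}V^*$ is indeed the projection onto $V\mathcal M$, which is closed because $V$ is an isometry.
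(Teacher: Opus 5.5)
Your proposal is correct and follows essentially the same route as the paper: in (ii) you group the terms as $V_2(I-V_2V_2^*)V_2^*-V_1(I-V_2V_2^*)V_1^*$, and in (i) you write the difference as $-(I-V_2V_2^*)+V_2(I-V_2V_2^*)V_2^*$, exactly as the paper does. You also make explicit two points the paper leaves implicit: the reordering $V_1^*V_2^*=V_2^*V_1^*$, and the fact that $VP_{\mathcal M}V^*$ is the projection onto the closed subspace $V\mathcal M$.
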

\begin{proof}
    $\mathrm{(i)}:$ It follows from \eqref{bidisc core operator} and \eqref{Hartogs triangle core operator} that \beqn
H(V_1, V_2)-C(V_1, V_2)&=&-I+2V_2V_2^*-V_2^2V_2^{*2} \notag \\ &=& -(I-V_2V_2^*)+V_2(I-V_2V_2^*)V_2^* \notag \\ &=& -P_{\mathcal{W}_{0, 1}} +P_{V_2\mathcal{W}_{0, 1}}.
    \eeqn
$\mathrm{(ii)}:$  It follows from \eqref{Hartogs triangle core operator} that \beqn  H(V_1, V_2) &=& V_2V_2^*-V_2^2V_2^{*2}-V_1V_1^*+V_1V_2V_1^*V_2^* \\ &=& V_2(I-V_2V_2^*)V_2^*-V_1(I-V_2V_2^*)V_1^* \\&=& P_{V_2\mathcal{W}_{0, 1}}-P_{V_1\mathcal{W}_{0, 1}}.
  \eeqn

\end{proof}
\begin{remark} \label{H-C relation}
 Let $\mathcal{W}_{0, 1} \neq \{0\}.$ Since closed subspaces $\mathcal{W}_{0,1}$ and $V_2\mathcal{W}_{0, 1}$ are orthogonal to each other, in view of Lemma \ref{C-H relation}, there exist $h$ and $k$ in $\mathcal{H}$ such that 
 \begin{align*}
    \langle (H(V_1, V_2)-C(V_1, V_2))h, h \rangle >0 \mbox{~and~} \langle (H(V_1, V_2)-C(V_1, V_2))k, k \rangle <0. 
 \end{align*}
 Also, it is easy to see that $H(V_1, V_2)-C(V_1, V_2)$ is a projection or negative of a projection if and only if $V_2$ is unitary.
\end{remark}
Let $(V_1, V_2)$ be a pair of commuting isometries on a Hilbert space $\mathcal{H}$ such that $H(V_1, V_2)=0.$ It follows from \eqref{Hartogs triangle core operator} that 
\begin{align} \label{onekerinano}
    \operatorname{ker}V_2^* \subseteq \operatorname{ker}V_1^*.
\end{align} It is worth noting that the converse may not be true, and the inclusion may be proper. Indeed, $(\mathscr M_{z_1}, \mathscr M_{z_2})$ is a pair of commuting isometries on the Hardy space $H^2(\triangle_H)$ of the Hartogs triangle for which $H(\mathscr M_{z_1}, \mathscr M_{z_2}) \neq 0$ and $\operatorname{ker}\mathscr M_{z_2}^*\subsetneq\operatorname{ker}\mathscr M_{z_1}^*,$ where $\mathscr M_{z_1}$ and $\mathscr M_{z_2}$ denote the bounded linear operators of multiplication by the coordinate functions $z_1$ and $z_2,$ respectively (see \cite[Remark~4.4]{CJP2023}). We now prove that \eqref{onekerinano} or \eqref{V_1=V_2V_3} holds whenever $H(V_1,V_2)\geq 0$ or $H(V_1,V_2)\leq 0$. As a consequence, we establish its connection with the bidisc core operator.

\begin{proof}[Proof of Theorem \ref{Hartogs bidisc core relation}]
$\mathrm{(i)} \iff \mathrm{(ii)} : $ Assume that $H(V_1, V_2) \geq 0.$ It suffices to check the behaviour of $H(V_1, V_2)$ on $\mathcal{W}_{1,2}$ as $H(V_1, V_2)=0$ on $V_1V_2^2\mathcal{H}.$ From Lemma \ref{wandering subspaces relation}($\mathrm{(i)} ~\&~ \mathrm{(ii)}$), it follows that
    \begin{align} \label{kernelw12}
\mathcal{W}_{1,2}=\mathcal{W}_{0,1} \oplus V_2\mathcal{W}_{0,1}\oplus V_2^2\mathcal{W}_{1,0}.
    \end{align}
For $x \in \mathcal{W}_{0,1},$ it is easy to see that $\langle H(V_1, V_2)x, x \rangle = -\|V_1^*x\|^2 \leq 0.$ Since $H(V_1, V_2) \geq 0,$ we have $V_1^*x=0.$ Hence, $\mathcal{W}_{0, 1} \subseteq \mathcal{W}_{1, 0}.$ As an isometry has a closed range, it implies $\operatorname{ran}V_1 \subseteq \operatorname{ran}V_2.$ For any $h \in \mathcal{H},$ there exists a unique $h' \in \mathcal{H}$ such that $V_1h=V_2h'.$ Define an operator $V_3 : \mathcal{H} \rightarrow \mathcal{H}$ such that $V_3h=h'$ (see \cite{Douglas}).

It is easy to see that $V_3$ is linear and $V_1=V_2V_3.$ Note that $$\|V_3h\| = \|h'\| = \|V_2h'\| = \|V_1h\| = \|h\|.$$ Hence, $V_3$ is bounded and, in fact, an isometry. Now, $$V_3V_2=V_2^*V_1V_2=V_1=V_2V_3.$$ Thus, $V_3$ commutes with $V_2.$ Since $C(V_3, V_2)=V_2^*H(V_1, V_2)V_2,$ we deduce that $C(V_3, V_2) \geq 0.$  For the converse part, observe that $H(V_1, V_2) = V_2C(V_3, V_2)V_2^*.$ Hence,  $H(V_1, V_2) \geq 0.$

$\mathrm{(i)} \iff \mathrm{(iii)} :$ It follows from the Lemma \ref{C-H relation}$\mathrm{(ii)}$.
\end{proof}
In view of Theorem \ref{Hartogs bidisc core relation}, $H(V_1, V_2)\geq 0$ yields a pair of doubly commuting isometries $(V_3, V_2)$ (see \cite[Theorem~3.4]{HQY2015}, \cite[Lemma 6.2]{MSS2019}). However, in this situation, the pair $(V_1, V_2)$ is never doubly commuting unless $V_2$ is unitary.
\begin{example}
 Consider an operator $V_1$ on $H^2(\mathbb D^2)$ defined as $V_1=M_1M_2.$ Then $(V_1, M_2)$ is pair of commuting isometries with $H(V_1, M_2) \geq0.$ In this situation, $V_3=M_1.$ Hence the pair $(M_1, M_2)$ is doubly commuting, however $(M_1M_2, M_2)$ is not.   
\end{example}
The Hardy space
$H^2(\triangle_H)$ of the Hartogs triangle is the reproducing kernel Hilbert space consisting of holomorphic
functions on $\triangle_H$ endowed with the norm
\begin{equation*}
\|f\|_{H^2(\triangle_H)}^2
=
\sup_{s,t\in (0,1)}
\frac{1}{4\pi^2}
\int_0^{2\pi}\int_0^{2\pi}
|f(ste^{i\theta},te^{i\gamma})|^2
\,st^2\,d\theta\,d\gamma,
~
f\in H^2(\triangle_H)
\end{equation*}
(see \cite[Section~3]{M2021}, \cite[Section~6]{GGLV2021}).
It follows that
\begin{align*}
H^2(\triangle_H)
=
\left\{
f(z)
=
\sum_{\alpha\in\mathcal I}
a_\alpha z^\alpha
:
a_\alpha\in\mathbb C,\;z\in \triangle_H,\;
\sum_{\alpha\in\mathcal I}|a_\alpha|^2<\infty
\right\},
\end{align*}
where $\mathcal I=\{\alpha=(\alpha_1, \alpha_2) \in \mathbb Z^2: \alpha_1\geq 0, ~\alpha_1+\alpha_2+1 \geq 0\}.$ Let $\mathcal Z$ be a complex separable Hilbert space. The $\mathcal Z$-valued Hardy space of the Hartogs triangle is defined by
\begin{align} \label{vectorhardy}
H^2_{\mathcal Z}(\triangle_H)
=
\left\{
f(z)
=
\sum_{\alpha\in\mathcal I}
a_\alpha z^\alpha
:
a_\alpha\in\mathcal Z,\; z\in \triangle_H,\;
\sum_{\alpha\in\mathcal I}\|a_\alpha\|_{\mathcal Z}^2<\infty
\right\}.
\end{align}
Equipped with the natural inner product,
$H^2_{\mathcal Z}(\triangle_H)$ is isometrically isomorphic to $H^2(\triangle_H)\otimes\mathcal Z.$ It is easy to see that $$H^2_{\mathcal Z}(\triangle_H)=\bigoplus_{\alpha \in \mathcal I}z^\alpha \mathcal Z.$$
Let $\mathscr M_{z_1}^{\mathcal Z}$ and $\mathscr M_{z_2}^{\mathcal Z}$ denote the multiplication operators by the coordinate functions $z_1$ and $z_2$, respectively, on $H^2_{\mathcal Z}(\triangle_H)$. It is easy to verify that both $\mathscr M_{z_1}^{\mathcal Z}$ and $\mathscr M_{z_2}^{\mathcal Z}$ are pure isometries. Recall that a pair of commuting operators $(V_1,V_2)$ and
$(W_1,W_2)$ on Hilbert spaces $\mathcal{H}$ and $\mathcal{K}$,
respectively, is said to be unitarily equivalent if there
exists a unitary map $U:\mathcal{H}\to\mathcal{K}$ such that
$UV_i=W_iU$ for $i=1,2$.
\begin{proof}[Proof of Proposition \ref{modelthm}]
   Since $H(V_1,V_2)\geq 0$ and both $V_1$ and $V_2$ are pure, Theorem~\ref{Hartogs bidisc core relation} yields a pure isometry $V_3$ commuting with $V_2$ such that
\begin{align*}
V_1=V_2V_3
\quad\text{and}\quad
C(V_3,V_2)\geq 0.
\end{align*}
By \cite[Lemma~6.2]{MSS2019}, the pair $(V_3,V_2)$ is doubly commuting. Hence, by \cite[Theorem~1]{Slocinski},
\begin{align*}
\mathcal H=\bigoplus_{\alpha_1, \alpha_2\in\mathbb Z_+}V_3^{\alpha_1}V_2^{\alpha_2}\mathcal Z,
\end{align*}
where
\begin{align*}
\mathcal Z=\mathcal W(V_2)\cap\mathcal W(V_3).
\end{align*}

Define
\begin{align*}
U:\bigoplus_{\alpha_1, \alpha_2\in\mathbb Z_+}V_3^{\alpha_1}V_2^{\alpha_2}\mathcal Z
\longrightarrow
H_{\mathcal Z}^2(\triangle_H)
=\bigoplus_{\alpha\in\mathcal I}z^\alpha\mathcal Z
\end{align*}
by
\begin{align*}
U\bigl(V_3^{\alpha_1}V_2^{\alpha_2}\gamma\bigr)
=
z_1^{\alpha_1}z_2^{\alpha_2-\alpha_1-1}\gamma,
\qquad
\gamma\in\mathcal Z,\;
\alpha_1, \alpha_2\in\mathbb Z_+.
\end{align*}
It is easy to see that $U$ extends to be a unitary operator. Moreover,
\begin{align*}
UV_1\bigl(V_3^{\alpha_1}V_2^{\alpha_2}\gamma\bigr)
&=
U\bigl(V_3^{\alpha_1+1}V_2^{\alpha_2+1}\gamma\bigr) \\
&=
z_1^{\alpha_1+1}z_2^{\alpha_2-\alpha_1-1}\gamma \\
&=
\mathscr M_{z_1}^\mathcal Z
U\bigl(V_3^{\alpha_1}V_2^{\alpha_2}\gamma\bigr),
\end{align*}
and similarly,
\begin{align*}
UV_2\bigl(V_3^{\alpha_1}V_2^{\alpha_2}\gamma\bigr)
&=
U\bigl(V_3^{\alpha_1}V_2^{\alpha_2+1}\gamma\bigr) \\
&=
z_1^{\alpha_1}z_2^{\alpha_2-\alpha_1}\gamma \\
&=
\mathscr M_{z_2}^\mathcal Z
U\bigl(V_3^{\alpha_1}V_2^{\alpha_2}\gamma\bigr)
\end{align*}
for each $\alpha_1, \alpha_2 \in \mathbb Z_+$ and $\gamma \in \mathcal Z.$ Therefore,
\begin{align*}
UV_1=\mathscr M_{z_1}^\mathcal ZU
\qquad\text{and}\qquad
UV_2=\mathscr M_{z_2}^\mathcal ZU,
\end{align*}
showing that $(V_1,V_2)$ is unitarily equivalent to $(\mathscr M_{z_1}^\mathcal Z,\mathscr M_{z_2}^\mathcal Z)$ on $H_{\mathcal Z}^2(\triangle_H)$. This completes the proof.
\end{proof}
Let $\{0\}\neq \mathcal M$ be a closed subspace of $H^2(\triangle_H)$ such that $\mathscr M_{z_i}\mathcal M \subseteq \mathcal M$ for $i=1,2.$ Then $(R_1, R_2)$ is a pair of pure commuting isometries on $\mathcal M,$ where $R_i=\mathscr M_{z_i}|_{\mathcal M}$ for $i=1,2.$ By Theorem \ref{modelthm}, the operator $H(R_1, R_2)\geq 0$ if and only if $(R_1, R_2)$ is unitarily equivalent to $(\mathscr M_{z_1}, \mathscr M_{z_2})$ on $H^2(\triangle_H).$ Indeed, $\mathcal Z=\mathcal W(R_2^*R_1) \cap \mathcal{W}(R_2)$ is of dimension $1.$

Similar to the Theorem \ref{Hartogs bidisc core relation}, we have the following result.
\begin{theorem} \label{Hartogs bidisc core relation1}
    Let $(V_1, V_2)$ be a pair of commuting isometries on a Hilbert space $\mathcal H.$ Then the following are equivalent:
    \begin{itemize}
       \item[$\mathrm{(i)}$]  $H(V_1, V_2) \leq 0.$
        \item[$\mathrm{(ii)}$] There exists an isometry $V_3$ commuting with $V_2$ such that $V_1=V_2V_3$ and $C(V_3, V_2) \leq 0.$
        \item[$\mathrm{(iii)}$]  $V_2(\mathcal{W}_{0,1}) \subseteq V_1(\mathcal{W}_{0,1}).$
    \end{itemize} 
\end{theorem}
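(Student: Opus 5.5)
The plan is to mirror the proof of Theorem \ref{Hartogs bidisc core relation}, using Lemma \ref{C-H relation}$\mathrm{(ii)}$ for the geometric condition and the conjugation identities $C(V_3,V_2)=V_2^*H(V_1,V_2)V_2$ and $H(V_1,V_2)=V_2C(V_3,V_2)V_2^*$. The step (i)$\Rightarrow$(ii) is the one I do not yet have an argument for, and I expect it to be the main obstacle.

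\emph{(i)$\iff$(iii).} By Lemma \ref{C-H relation}$\mathrm{(ii)}$,
\begin{align*}
H(V_1,V_2)=P_{V_2\mathcal W_{0,1}}-P_{V_1\mathcal W_{0,1}}.
\end{align*}
For orthogonal projections, $P\le Q$ holds if and only if $\operatorname{ran}P\subseteq\operatorname{ran}Q$. Hence $H(V_1,V_2)\le 0$ is equivalent to $V_2\mathcal W_{0,1}\subseteq V_1\mathcal W_{0,1}$. Both subspaces are closed, since an isometry maps closed subspaces onto closed subspaces.

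\emph{(ii)$\Rightarrow$(i).} If $V_1=V_2V_3$ with $V_3V_2=V_2V_3$, then
\begin{align*}
V_2C(V_3,V_2)V_2^*&=V_2V_2^*-V_2V_3V_3^*V_2^*-V_2^2V_2^{*2}+V_2V_3V_2V_3^*V_2^*V_2^*\\
&=V_2V_2^*-V_1V_1^*-V_2^2V_2^{*2}+V_1V_2V_1^*V_2^*=H(V_1,V_2),
\end{align*}
so $C(V_3,V_2)\le 0$ forces $H(V_1,V_2)\le 0$.

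\emph{(i)$\Rightarrow$(ii): the obstacle.} Once the factorization $V_1=V_2V_3$ is available, the rest goes through as in Theorem \ref{Hartogs bidisc core relation}:
\begin{itemize}
\item[(a)] by Douglas's lemma, $V_3h=h'$ where $V_1h=V_2h'$ defines an isometry;
\item[(b)] $V_3V_2=V_2^*V_1V_2=V_1=V_2V_3$, so $V_3$ commutes with $V_2$;
\item[(c)] $C(V_3,V_2)=V_2^*H(V_1,V_2)V_2\le 0$.
\end{itemize}
So everything reduces to proving $\operatorname{ran}V_1\subseteq\operatorname{ran}V_2$, equivalently $\mathcal W_{0,1}\subseteq\mathcal W_{1,0}$.

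In Theorem \ref{Hartogs bidisc core relation} this came from testing on $x\in\mathcal W_{0,1}$, where $\langle H(V_1,V_2)x,x\rangle=-\|V_1^*x\|^2$. When $H(V_1,V_2)\le0$ that sign carries no information, so a different argument is needed. What I would try first is to work inside $\mathcal W_{1,2}=\mathcal W_{0,1}\oplus V_2\mathcal W_{0,1}\oplus V_2^2\mathcal W_{1,0}$, which is \eqref{kernelw12}, and combine it with (iii).

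However, there is a genuine worry about the statement itself. Iterating (iii) gives
\begin{align*}
V_2^2\mathcal W_{0,1}\subseteq V_2V_1\mathcal W_{0,1}=V_1V_2\mathcal W_{0,1}\subseteq V_1^2\mathcal W_{0,1},
\end{align*}
which pushes toward $\operatorname{ran}V_2\subseteq\operatorname{ran}V_1$, the reverse of the inclusion we need. A test case also points this way: if $V_1=V_2V_3$ with $V_3=V_2$, then
\begin{align*}
C(V_2,V_2)=I-2V_2V_2^*+V_2^2V_2^{*2}=P_{\mathcal W_{0,1}}-P_{V_2\mathcal W_{0,1}},
\end{align*}
which is not $\le 0$ when $\mathcal W_{0,1}\neq\{0\}$.

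So before committing to a proof of (i)$\Rightarrow$(ii), I would test the implication on $H^2(\mathbb D^2)$ with pairs of the form $(M_1,M_1M_2)$. That is exactly where $H(V_1,V_2)\le 0$ holds with the roles of $V_1$ and $V_2$ reversed. If the implication fails there, the statement of (ii) should presumably be corrected to the symmetric form $V_2=V_1V_3$ with $C(V_1,V_3)\le 0$ (or similar). The proof of that corrected version would then follow the same Douglas-lemma argument with (iii) as the input.
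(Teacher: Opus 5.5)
\textbf{There is a genuine gap.} Your arguments for (i)$\iff$(iii) (via $P\le Q\iff\operatorname{ran}P\subseteq\operatorname{ran}Q$) and for (ii)$\Rightarrow$(i) (via $H(V_1,V_2)=V_2C(V_3,V_2)V_2^*$) are correct and match the paper. However, you leave (i)$\Rightarrow$(ii) unproved, and that is the substantive direction.

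The missing step is to test $H(V_1,V_2)$ on $V_2x$ rather than on $x\in\mathcal W_{0,1}$. Using $V_2^*x=0$, the four terms give
\[
\langle H(V_1,V_2)V_2x,V_2x\rangle=\|x\|^2-\|V_1^*V_2x\|^2+\|V_1^*x\|^2\;\ge\;\|V_1^*x\|^2 ,
\]
since $\|V_1^*V_2x\|\le\|x\|$. So $H(V_1,V_2)\le 0$ forces $V_1^*x=0$. This gives $\mathcal W_{0,1}\subseteq\mathcal W_{1,0}$, i.e.\ $\operatorname{ran}V_1\subseteq\operatorname{ran}V_2$, and your steps (a)--(c) then finish exactly as in the paper.

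Equivalently, you can start from (iii). For $x\in\mathcal W_{0,1}$ write $V_2x=V_1y$ with $y\in\mathcal W_{0,1}$. Then for every $h$,
\[
\langle x,V_1h\rangle=\langle V_2x,V_2V_1h\rangle=\langle V_1y,V_1V_2h\rangle=\langle y,V_2h\rangle=0 .
\]

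\textbf{Your doubts about the statement are unfounded.} The heuristics behind them point the wrong way.
\begin{itemize}
\item The inclusion $V_2\mathcal W_{0,1}\subseteq V_1\mathcal W_{0,1}$ does not push toward $\operatorname{ran}V_2\subseteq\operatorname{ran}V_1$. What matters is that $V_1\mathcal W_{0,1}\perp V_1V_2\mathcal H$. Pulling this back through $V_2$ gives $\mathcal W_{0,1}\perp V_1\mathcal H$, which is the inclusion you need.
\item The case $V_3=V_2$ only shows that $(V_2^2,V_2)$ fails (ii). Indeed $H(V_2^2,V_2)=V_2C(V_2,V_2)V_2^*$ is indefinite when $\mathcal W_{0,1}\neq\{0\}$, which is consistent with the theorem, not a counterexample.
\item Your proposed test pair $(M_1,M_1M_2)$ on $H^2(\mathbb D^2)$ does not satisfy $H\le 0$. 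Take $x=z_1\in\ker(M_1M_2)^*$; the displayed formula gives $1-\|z_1z_2\|^2+\|1\|^2=1>0$. Equivalently, $z_1^2z_2\in V_2\mathcal W_{0,1}\setminus V_1\mathcal W_{0,1}$.
\end{itemize}
Your ``corrected'' form with $V_2=V_1V_3$ would in fact be wrong. Since $H\le 0$ always forces $\operatorname{ran}V_1\subseteq\operatorname{ran}V_2$, requiring also $V_2=V_1V_3$ would force $\operatorname{ran}V_1=\operatorname{ran}V_2$, hence $V_3$ unitary. The theorem as stated is correct.
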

\begin{proof}
$\mathrm{(i)} \iff \mathrm{(ii)} : $ Assume that $H(V_1, V_2) \leq 0.$ In view of \eqref{kernelw12}, for $x \in \mathcal{W}_{0,1},$ it is easy to see that
\beqn
\langle H(V_1, V_2)V_2x, V_2x \rangle = \|x\|^2 -\|V_1^*V_2x\|^2+\|V_1^*x\|^2.
\eeqn
Since $H(V_1, V_2) \leq 0,$ we have $V_1^*x=0.$ Now arguing similarly as in the proof Theorem \ref{Hartogs bidisc core relation} yields $\mathrm{(ii)}.$ For the converse part, observe that $H(V_1, V_2) = V_2C(V_3, V_2)V_2^*.$ Hence,  $H(V_1, V_2) \leq 0.$

$\mathrm{(i)} \iff \mathrm{(iii)} :$ It follows from the Lemma \ref{C-H relation}$\mathrm{(ii)}$.
\end{proof}
An application of Theorem \ref{Hartogs bidisc core relation} yields the following.
\begin{corollary}
    Let $(V_1, V_2)$ be a pair of doubly commuting isometries on a Hilbert space $\mathcal H$ such that $H(V_1, V_2) \geq 0.$ Then $V_2$ is unitary.
\end{corollary}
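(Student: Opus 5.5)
By Theorem \ref{Hartogs bidisc core relation}, the hypothesis $H(V_1,V_2)\geq 0$ gives an isometry $V_3$ commuting with $V_2$ such that $V_1=V_2V_3$. The proof of that theorem also gives the more basic inclusion $\mathcal{W}_{0,1}\subseteq\mathcal{W}_{1,0}$, that is, $V_1^*x=0$ for every $x\in\ker V_2^*$. The plan is to combine this inclusion with double commutativity, $V_1^*V_2=V_2V_1^*$, and show that $\mathcal{W}_{0,1}=\{0\}$. Since $V_2$ is already an isometry, this forces $V_2$ to be unitary.

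Let $x\in\mathcal{W}_{0,1}$. Double commutativity makes $\mathcal{W}_{0,1}=\ker V_2^*$ invariant under $V_1$, because $V_2^*V_1x=V_1V_2^*x=0$. Hence $V_1x\in\mathcal{W}_{0,1}\subseteq\mathcal{W}_{1,0}$, so $V_1^*V_1x=0$. But $V_1$ is an isometry, so $V_1^*V_1x=x$, which gives $x=0$. Therefore $\ker V_2^*=\{0\}$, and $V_2$ is unitary.

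The key observation is that $\ker V_2^*$ is invariant under $V_1$ in the doubly commuting case. This is exactly the condition $V_1(\mathcal{W}(V_2))\subseteq\mathcal{W}(V_2)$, and it clashes with Theorem \ref{Hartogs bidisc core relation}(iii), which puts $V_1(\mathcal{W}(V_2))$ inside $V_2(\mathcal{W}(V_2))$. These two subspaces are orthogonal, so $V_1(\mathcal{W}(V_2))=\{0\}$, and injectivity of $V_1$ gives the conclusion again.

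This second route does not need the inclusion from the earlier proof, and I would present it as the main argument. There is no real obstacle; the only point needing care is that the invariance comes from $V_2^*V_1=V_1V_2^*$, not from ordinary commutativity.
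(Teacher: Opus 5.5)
Your proof is correct, and your main (second) argument is the paper's proof: the paper combines $V_1\mathcal{W}_{0,1}\subseteq V_2\mathcal{W}_{0,1}$ from Theorem \ref{Hartogs bidisc core relation}(iii) with double commutativity to get $\mathcal{W}_{0,1}=\{0\}$, and you supply the details it leaves implicit ($V_1$-invariance of $\ker V_2^*$ via $V_2^*V_1=V_1V_2^*$, orthogonality of $\mathcal{W}_{0,1}$ and $V_2\mathcal{W}_{0,1}$, and injectivity of $V_1$). Your first route via $\mathcal{W}_{0,1}\subseteq\mathcal{W}_{1,0}$ is an equally valid minor variant.
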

\begin{proof}
   Since $H(V_1,V_2)\geq 0,$ by Theorem \ref{Hartogs bidisc core relation}, we get $V_1\mathcal{W}_{0,1} \subseteq V_2\mathcal{W}_{0,1}.$ This combined with the fact that $(V_1, V_2)$ is doubly commuting yields $\mathcal {W}_{0,1}=\{0\}.$ Therefore, $V_2$ is unitary.
\end{proof}
In the previous result, if the assumption that  $(V_1, V_2)$ being doubly commuting is dropped, then $H(V_1, V_2) \geq 0$ need not imply that $V_2$ is unitary. It is worth mentioning that there exist pairs of commuting isometries with no unitary component in $V_2$ for which $H(V_1, V_2)\gneq 0.$ This phenomenon will be examined in the next section.

We now show that a negative or positive $\triangle_H$-core operator is zero when $\mathcal{W}_{0, 1}$ or $\mathcal{W}_{1, 0}$ are finite-dimensional.
\begin{corollary} \label{wanderingfinite}
     Let $(V_1, V_2)$ be a pair of commuting isometries on a Hilbert space $\mathcal H$ such that the subspaces $\mathcal{W}_{0, 1}$ or $\mathcal{W}_{1, 0}$ are finite-dimensional. Then $H(V_1, V_2) \geq 0$ or $H(V_1, V_2) \leq 0$ if and only if $H(V_1, V_2) = 0.$
\end{corollary}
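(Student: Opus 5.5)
The plan is to use the formula $H(V_1,V_2)=P_{V_2\mathcal{W}_{0,1}}-P_{V_1\mathcal{W}_{0,1}}$ from Lemma \ref{C-H relation}$\mathrm{(ii)}$, together with the subspace inclusions in Theorems \ref{Hartogs bidisc core relation} and \ref{Hartogs bidisc core relation1}, and then compare dimensions. The direction ``$H(V_1,V_2)=0$ implies $H(V_1,V_2)\geq 0$ or $H(V_1,V_2)\leq 0$'' is trivial, so only the forward direction needs work.

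First I reduce the case where $\mathcal{W}_{1,0}$ is finite-dimensional to the case where $\mathcal{W}_{0,1}$ is. In the proof of Theorem \ref{Hartogs bidisc core relation}, $H(V_1,V_2)\geq 0$ gives $V_1^*x=0$ for every $x\in\mathcal{W}_{0,1}$, that is, $\mathcal{W}_{0,1}\subseteq \mathcal{W}_{1,0}$. The proof of Theorem \ref{Hartogs bidisc core relation1} gives the same conclusion from $H(V_1,V_2)\leq 0$. So under either sign condition, finite-dimensionality of $\mathcal{W}_{1,0}$ forces $\mathcal{W}_{0,1}$ to be finite-dimensional. From now on I assume $d:=\dim \mathcal{W}_{0,1}<\infty$.

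Suppose $H(V_1,V_2)\geq 0$. By Theorem \ref{Hartogs bidisc core relation}$\mathrm{(iii)}$, $V_1\mathcal{W}_{0,1}\subseteq V_2\mathcal{W}_{0,1}$. Since $V_1$ and $V_2$ are isometries, both subspaces have dimension exactly $d$. A $d$-dimensional subspace contained in another $d$-dimensional subspace is equal to it, so $V_1\mathcal{W}_{0,1}=V_2\mathcal{W}_{0,1}$. The two projections in Lemma \ref{C-H relation}$\mathrm{(ii)}$ then coincide, and $H(V_1,V_2)=0$.

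If instead $H(V_1,V_2)\leq 0$, Theorem \ref{Hartogs bidisc core relation1}$\mathrm{(iii)}$ gives the reverse inclusion $V_2\mathcal{W}_{0,1}\subseteq V_1\mathcal{W}_{0,1}$, and the same dimension count yields $H(V_1,V_2)=0$.

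The only delicate point is the reduction in the first step. There I rely on the inclusion $\mathcal{W}_{0,1}\subseteq\mathcal{W}_{1,0}$ having been established under each sign condition separately. I would recheck this for the $\leq 0$ case from the quadratic form on $V_2\mathcal{W}_{0,1}$ computed in the proof of Theorem \ref{Hartogs bidisc core relation1}.
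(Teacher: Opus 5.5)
Your proposal is correct and follows the paper's proof essentially step for step. Both use the inclusions from Theorems \ref{Hartogs bidisc core relation}$\mathrm{(iii)}$ and \ref{Hartogs bidisc core relation1}$\mathrm{(iii)}$, a dimension count to force $V_1\mathcal{W}_{0,1}=V_2\mathcal{W}_{0,1}$, Lemma \ref{C-H relation}$\mathrm{(ii)}$, and the reduction of the $\mathcal{W}_{1,0}$ case via $\mathcal{W}_{0,1}\subseteq\mathcal{W}_{1,0}$ taken from the two theorems' proofs. The point you flag for rechecking does hold: for $x\in\mathcal{W}_{0,1}$ one has $\langle H(V_1,V_2)V_2x,V_2x\rangle=(\|x\|^2-\|V_1^*V_2x\|^2)+\|V_1^*x\|^2$, a sum of two nonnegative terms, so $H(V_1,V_2)\leq 0$ forces $V_1^*x=0$.
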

\begin{proof}
 Assume that $\mathcal{W}_{0, 1}$ is finite-dimensional. Then by Theorems \ref{Hartogs bidisc core relation} and \ref{Hartogs bidisc core relation1}, if $H(V_1, V_2) \geq 0$ or $H(V_1, V_2) \leq 0,$ respectively, then $V_1(\mathcal{W}_{0, 1}) \subseteq V_2(\mathcal{W}_{0, 1})$ or $V_2(\mathcal{W}_{0, 1}) \subseteq V_1(\mathcal{W}_{0, 1}),$ respectively. Since $V_1$ and $V_2$ are isometries and $\operatorname{dim}\mathcal{W}_{0, 1} < \infty,$ we have 
     $$V_1(\mathcal{W}_{0, 1}) = V_2(\mathcal{W}_{0, 1}).$$ Thus, by Lemma \ref{C-H relation}(ii), we have  $H(V_1, V_2)=P_{V_2\mathcal{W}_{0, 1}}-P_{V_1\mathcal{W}_{0, 1}}=0.$
     Now assume that $\operatorname{dim}\mathcal{W}_{1, 0} < \infty$ along with $H(V_1, V_2) \geq 0$ or $H(V_1, V_2) \leq 0.$ Then from the proofs of Theorems \ref{Hartogs bidisc core relation} and \ref{Hartogs bidisc core relation1}, it follows that $\operatorname{dim}\mathcal{W}_{0, 1}< \infty.$ This completes the proof.
\end{proof}
We now establish a characterization of the positivity of
$H(V_1V_2, V_2)$ in terms of the invariance of wandering subspaces,
and deduce that $(V_1, V_2)$ is doubly commuting if and only if
$H(V_1V_2, V_2)\geq 0$. For this, we first recall the construction of the classical Wold-von Neumann decomposition of isometric operators on Hilbert spaces (see \cite{Wold}).
\begin{theorem}
If $V$ is an isometry on a Hilbert space $\mathcal{H}$ and $\mathcal{H}_{u}(V)=\displaystyle \bigcap_{n \in \mathbb Z_+}V^n\mathcal{H},$ then $\mathcal{H}_u(V)$ reduces $V,$ $V|_{\mathcal{H}_u(V)}$ is unitary, and $V|_{\mathcal{H}_s(V)}$ is a unilateral shift, where $\displaystyle \mathcal{H}_s(V)=\mathcal{H}_u(V)^{\perp}=\bigoplus_{n \in \mathbb Z_+}V^n\mathcal{W}(V),$ which also reduces $V.$
\end{theorem}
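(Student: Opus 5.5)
We prove the three assertions in order. Throughout, write $\mathcal W=\mathcal W(V)=\mathcal H\ominus V\mathcal H$. The key fact is that $\mathcal W$ is a wandering subspace, i.e. $V^m\mathcal W\perp V^n\mathcal W$ for $m\neq n$. Indeed, for $m>n$ and $x,y\in\mathcal W$,
\[
\langle V^m x,V^n y\rangle=\langle V^{m-n}x,y\rangle=0,
\]
since $y\perp V\mathcal H$ and $V^{m-n}x\in V\mathcal H$. Hence $\mathcal H_s(V):=\bigoplus_{n\in\mathbb Z_+}V^n\mathcal W$ is a well-defined closed subspace.

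\emph{Step 1: the orthogonal complement.} We show $\mathcal H_s(V)^\perp=\mathcal H_u(V)$. Iterating $\mathcal H=\mathcal W\oplus V\mathcal H$ and using that $V^n$ is isometric gives
\[
\mathcal H=\mathcal W\oplus V\mathcal W\oplus\cdots\oplus V^{n-1}\mathcal W\oplus V^n\mathcal H
\]
for every $n$. So $h\perp V^k\mathcal W$ for $k<n$ if and only if $h\in V^n\mathcal H$. Letting $n\to\infty$ gives $h\perp\mathcal H_s(V)$ if and only if $h\in\bigcap_n V^n\mathcal H=\mathcal H_u(V)$. Since $\mathcal H_u(V)=\mathcal H_s(V)^\perp$, it is a closed subspace.

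\emph{Step 2: reducing and unitarity.} $\mathcal H_s(V)$ is clearly $V$-invariant. It is also $V^*$-invariant: $V^*$ annihilates $\mathcal W$ and maps $V^{n}\mathcal W$ onto $V^{n-1}\mathcal W$ for $n\ge1$. Hence $\mathcal H_s(V)$ reduces $V$, and therefore so does its complement $\mathcal H_u(V)$. On $\mathcal H_u(V)$, every $h$ lies in $V^{n+1}\mathcal H$ for all $n$. Write $h=Vg$. Then $g=V^*h$, and since $\mathcal H_u(V)$ reduces $V$, we get $g\in\mathcal H_u(V)$. So $V|_{\mathcal H_u(V)}$ is a surjective isometry, i.e. unitary.

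\emph{Step 3: the shift part.} On $\mathcal H_s(V)$, the map $\bigoplus_n V^n w_n\mapsto (w_n)_n\in\ell^2(\mathbb Z_+)\otimes\mathcal W$ is unitary. It intertwines $V|_{\mathcal H_s(V)}$ with $S\otimes I_{\mathcal W}$, so $V|_{\mathcal H_s(V)}$ is a unilateral shift of multiplicity $\dim\mathcal W$.

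The only step requiring care is the limit argument in Step 1, namely identifying $\bigl(\bigoplus_n V^n\mathcal W\bigr)^\perp$ with $\bigcap_n V^n\mathcal H$. It is handled by the finite decomposition above, applied for each $n$.
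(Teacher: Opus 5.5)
Your proof is correct and complete. The pieces are the wandering orthogonality, the finite decompositions $\mathcal H=\mathcal W\oplus V\mathcal W\oplus\cdots\oplus V^{n-1}\mathcal W\oplus V^n\mathcal H$ giving $\mathcal H_s(V)^\perp=\mathcal H_u(V)$ (and hence $\mathcal H_u(V)^\perp=\mathcal H_s(V)$, since $\mathcal H_s(V)$ is closed), the reducing and surjectivity argument on $\mathcal H_u(V)$, and the unitary identification of $V|_{\mathcal H_s(V)}$ with $S\otimes I_{\mathcal W}$. Together they form the standard proof of the Wold--von Neumann decomposition. The paper gives no proof of this statement; it recalls the classical theorem with a citation, so your argument is the textbook proof it relies on.
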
 As an application, we have the following.
\begin{lemma} \label{Doubly commuting lemma }
   Let $(V_1, V_2)$ be a pair of commuting isometries on a Hilbert space $\mathcal{H}.$ Then for all $j=1,2$ and $m,n \in \mathbb N,$
   \begin{itemize}
       \item[$\mathrm{(i)}$] $\mathcal{H}_s(V_{m, n})$ and $\mathcal{H}_u(V_{m, n})$ are $V_j$-reducing subspaces,

       \item[$\mathrm{(ii)}$] $\mathcal{H}_s(V_j) \subseteq \mathcal{H}_s(V_{m, n})$ and $ \mathcal{H}_u(V_{m, n}) \subseteq \mathcal{H}_u(V_j).$

   \end{itemize}
\end{lemma}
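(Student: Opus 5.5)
The plan is to work directly from the Wold--von Neumann decomposition of the single isometry $V:=V_{m,n}=V_1^mV_2^n$, using that $V_j$ commutes with $V$, and then to compare the unitary parts of $V_j$ and $V$.

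For (i), I will show that $\mathcal{H}_u(V_{m,n})=\bigcap_{k\in\mathbb Z_+}V^k\mathcal{H}$ is invariant under both $V_j$ and $V_j^*$; its orthocomplement $\mathcal{H}_s(V_{m,n})$ is then automatically reducing. Invariance under $V_j$ is immediate, since $V_jV^k\mathcal{H}=V^kV_j\mathcal{H}\subseteq V^k\mathcal{H}$. For $V_j^*$, the key fact is that $V_j^*$ commutes with $VV^*$-type expressions in the following sense. Write $V=V_jA$, where $A$ is the isometry $V_1^{m-1}V_2^n$ if $j=1$ and $V_1^mV_2^{n-1}$ if $j=2$; here $m,n\ge1$ is used. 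Then $A$ commutes with $V$ and $V_j^*V=A$. Given $x\in\mathcal{H}_u(V)$ and $k\ge1$, write $x=V^{k+1}y$. Then
\[
V_j^*x=V_j^*V\,V^{k}y=AV^ky=V^kAy\in V^k\mathcal{H}.
\]
Since $k$ is arbitrary, $V_j^*x\in\mathcal{H}_u(V)$, which gives (i).

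For (ii), it suffices to prove $\mathcal{H}_u(V_{m,n})\subseteq\mathcal{H}_u(V_j)$ and then take orthogonal complements. From $V^k=V_j^{k}B_k$, where $B_k$ is the corresponding product of the remaining factors (valid because the exponent of $V_j$ in $V^k$ is at least $k$), we get $V^k\mathcal{H}\subseteq V_j^k\mathcal{H}$. Intersecting over $k$ yields $\mathcal{H}_u(V)\subseteq\mathcal{H}_u(V_j)$. Complementing gives $\mathcal{H}_s(V_j)\subseteq\mathcal{H}_s(V_{m,n})$.

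The only step requiring care is the $V_j^*$-invariance in (i): a general element of $\mathcal{H}_u(V)$ has to be written as $V^{k+1}y$ so that one factor of $V$ can be peeled off in order to absorb $V_j^*$. The rest is routine bookkeeping with commuting isometries.
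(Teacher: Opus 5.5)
Your proof is correct, and in part (i) it takes a different route from the paper.

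The paper works on the shift part $\mathcal{H}_s(V_{m,n})=\bigoplus_k V_{m,n}^k\mathcal{W}_{m,n}$. It uses the wandering-subspace splittings of Lemma~\ref{wandering subspaces relation}(ii),(iii) to show $V_1\mathcal{W}_{m,n}\subseteq\mathcal{W}_{m,n}\oplus V_{m,n}\mathcal{W}_{m,n}$, and to show that $V_1^*$ sends each $V_{m,n}^k\mathcal{W}_{m,n}$ back into $\mathcal{H}_s(V_{m,n})$. Only afterwards does it obtain $\mathcal{H}_u(V_{m,n})$ by orthogonal complementation. You work instead on the unitary part $\bigcap_k V^k\mathcal{H}$, and you need only the identity $V_j^*V=A$ with $A$ an isometry commuting with $V$. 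This avoids Lemma~\ref{wandering subspaces relation} entirely and treats $j=1,2$ at once. It also sidesteps the exponent bookkeeping: the paper's display $V_1^*V_{m,n}^k\mathcal{W}_{m,n}=V_{m,n}^{k-1}V_2\mathcal{W}_{m,n}$ is written for $m=n=1$, and in general $V_2$ must be replaced by $V_1^{m-1}V_2^n$. The paper's route, in exchange, records how $V_1$ moves the wandering subspace $\mathcal{W}_{m,n}$, which fits the wandering-subspace viewpoint of the rest of the section.

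In (ii), your proof of $\mathcal{H}_u(V_{m,n})\subseteq\mathcal{H}_u(V_j)$ is the paper's. For the $\mathcal{H}_s$ inclusion you take orthogonal complements, while the paper appeals to $\mathcal{H}_s(V)=\{h: V^{*k}h\to 0\}$; yours needs nothing beyond the Wold decomposition.

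One cosmetic correction: drop the remark that $V_j^*$ ``commutes with $VV^*$-type expressions''. It is false in general: for $V_1=V_2=S$ and $m=n=1$ one has $S^*S^2S^{*2}=SS^{*2}$ but $S^2S^{*2}S^*=S^2S^{*3}$. Your computation never uses it.
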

\begin{proof}
$\mathrm{(i)} :$ We first prove that $\mathcal{H}_s(V_{m, n})$ and $\mathcal{H}_u(V_{m, n})$ are $V_1$-reducing subspaces. Since $\mathcal{W}_{m-1, n}, \mathcal{W}_{1, 0} \subseteq \mathcal{W}_{m, n},$ by Lemma~\ref{wandering subspaces relation}(ii), it follows that $$V_1 \mathcal{W}_{m,n} = V_1(\mathcal{W}_{m-1, n}) \oplus V_{m,n}\mathcal{W}_{1,0} \subseteq \mathcal{W}_{m, n} \oplus V_{m,n}\mathcal{W}_{m, n} .$$ Consequently, for all $k \geq 0,$
\beqn
V_1V^k_{m,n}\mathcal{W}_{m,n} = V^k_{m,n}(V_1 \mathcal{W}_{m,n}) \subseteq V^k_{m,n}(\mathcal{W}_{m, n} \oplus V_{m,n}\mathcal{W}_{m, n}) \subseteq \mathcal{H}_s(V_{m,n}).
\eeqn
Thus we have, $V_1\mathcal{H}_s(V_{m, n}) \subseteq \mathcal{H}_s(V_{m, n}).$ Furthermore, by Lemma \ref{wandering subspaces relation}(iii),
\beqn
V_1^*V_{m,n}^k\mathcal{W}_{m,n} = V_{m,n}^{k-1}V_2\mathcal{W}_{m,n} &\subseteq& V_{m,n}^{k-1}V_2(\mathcal{W}_{m,n-1}+V_{m, n-1}\mathcal{W}_{0,1}) \\ &\subseteq& V_{m,n}^{k-1}(\mathcal{W}_{m,n} +V_{m,n}\mathcal{W}_{m,n})
\eeqn
for each $k \in \mathbb N.$ This combined with the fact that $V_1^*\mathcal{W}_{m,n} \subseteq \mathcal{W}_{m,n}$ yields $V_1^*\mathcal{H}_s(V_{m, n}) \subseteq \mathcal{H}_s(V_{m, n}).$ Hence, $\mathcal{H}_s(V_{m, n})$ is $V_1$-reducing subspace. Since $\mathcal{H}_u(V_{m, n}) = \mathcal{H}_s(V_{m, n})^\perp,$ it is easy to see that $\mathcal{H}_u(V_{m, n})$ is also $V_1$-reducing. Similarly, $\mathcal{H}_s(V_{m, n})$ and $\mathcal{H}_u(V_{m, n})$ are $V_2$-reducing subspaces.

$\mathrm{(ii)} :$ Given that $ V_{m, n}^k \mathcal{H}= V_{1}^k V_{m-1, n}^k \mathcal{H}= V_{2}^k V_{m, n-1}^k \mathcal{H} \subseteq V_{1}^k \mathcal{H},~  V_{2}^k \mathcal{H}$ for all $k \geq 0,$ we deduce that $\mathcal{H}_u(V_{m, n}) \subseteq \mathcal{H}_u(V_{j})$ for $j=1, 2.$ Additionally, the inclusion $\mathcal{H}_s(V_j) \subseteq \mathcal{H}_s(V_{m, n})$ follows from the observation that 
$$V_1^{*k}h\rightarrow 0, \mbox{ or }V_2^{*k}h\rightarrow 0 \implies V_{m,n }^{*k}h\rightarrow 0 \mbox{ as } k \rightarrow \infty$$ for any $h \in \mathcal H,$ $m,n \in \mathbb N$ and $j=1,2$. This completes the proof.
\end{proof}
The following results yield a characterization of $H(V_1V_2, V_2)\geq 0.$
\begin{proposition} \label{doublychara}
Assume that $m,n \in \mathbb N.$ Let $(V_1, V_2)$ be a pair of commuting isometries on a Hilbert space $\mathcal H.$ Then $H(V_1V_2, V_2)\geq 0$ if and only if $V_{0,n}(\mathcal{W}_{m,0}) \subseteq \mathcal{W}_{m, 0}.$   
\end{proposition}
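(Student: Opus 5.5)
The plan is to reduce both sides of the equivalence to commutation relations between $V_1$, $V_2$ and their adjoints. The left side should reduce, via Theorem \ref{Hartogs bidisc core relation}, to double commutativity of $(V_1,V_2)$. The right side should reduce, by a short computation, to double commutativity of $(V_1^m,V_2^n)$. The proof then closes by comparing these two conditions.

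\emph{Step 1 (the left side).} Applying Theorem \ref{Hartogs bidisc core relation}(iii) to the commuting isometric pair $(V_1V_2,V_2)$ shows that $H(V_1V_2,V_2)\geq 0$ iff $V_1V_2\mathcal W_{0,1}\subseteq V_2\mathcal W_{0,1}$. Since $V_2$ is injective, this is equivalent to $V_1\mathcal W_{0,1}\subseteq \mathcal W_{0,1}$, i.e. $V_2^*V_1P_{\mathcal W_{0,1}}=0$. On $\operatorname{ran}V_2$ we always have $V_2^*V_1V_2y=V_1y=V_1V_2^*V_2y$, while on $\mathcal W_{0,1}=\ker V_2^*$ the operator $V_1V_2^*$ vanishes. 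Hence
\begin{align*}
H(V_1V_2,V_2)\geq 0 \iff V_2^*V_1=V_1V_2^*,
\end{align*}
that is, iff $(V_1,V_2)$ is doubly commuting.

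\emph{Step 2 (the right side).} Here $\mathcal W_{m,0}=\ker V_1^{*m}$. The inclusion $V_2^n\mathcal W_{m,0}\subseteq\mathcal W_{m,0}$ means $V_1^{*m}V_2^n(I-V_1^mV_1^{*m})=0$. Using $V_1^{*m}V_2^nV_1^m=V_2^n$, this is equivalent to
\begin{align*}
V_1^{*m}V_2^n=V_2^nV_1^{*m},
\end{align*}
so the right side says that $(V_1^m,V_2^n)$ is doubly commuting. The forward implication of the proposition is now immediate: if $V_1^*V_2=V_2V_1^*$, then iterating gives $V_1^{*m}V_2^n=V_2^nV_1^{*m}$.

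\emph{Step 3 (the converse, the main obstacle).} It remains to show that double commutativity of $(V_1^m,V_2^n)$ forces $V_1^*V_2=V_2V_1^*$. Equivalently, for $x\in\mathcal W_{1,0}$ we must show $V_1^*V_2x=0$, knowing only that $V_1^{*m}V_2^nx=0$. I would proceed in the following order.
\begin{itemize}
\item[(a)] Apply Słociński's decomposition \cite[Theorem~1]{Slocinski} to the doubly commuting pair $(A,B):=(V_1^m,V_2^n)$. This gives
\begin{align*}
\mathcal H=\mathcal H_{uu}\oplus\mathcal H_{us}\oplus\mathcal H_{su}\oplus\mathcal H_{ss},
\end{align*}
and on the shift--shift summand $\mathcal H_{ss}=\bigoplus_{i,j\ge 0}A^iB^j(\mathcal W(A)\cap\mathcal W(B))$.
\item[(b)] Use Lemma \ref{Doubly commuting lemma } (with the analogous statements for powers) to show that each of the four summands reduces $V_1$ and $V_2$. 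The point is that these summands are built from $\mathcal H_s$ and $\mathcal H_u$ of $V_1^m$ and of $V_2^n$, and $\mathcal H_s(V_1^m)=\mathcal H_s(V_1)$, $\mathcal H_s(V_2^n)=\mathcal H_s(V_2)$.
\item[(c)] On summands where $V_1$ or $V_2$ acts unitarily, double commutativity follows from Fuglede-type arguments: an isometry commuting with a unitary doubly commutes with it.
\item[(d)] On the shift--shift part, decompose Lemma \ref{wandering subspaces relation}(ii) style,
\begin{align*}
\mathcal W_{m,0}=\bigoplus_{k<m}V_1^k\mathcal W_{1,0},
\end{align*}
and similarly $\mathcal W_{0,n}=\bigoplus_{l<n}V_2^l\mathcal W_{0,1}$. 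Then compare the grading by powers of $A$ and $B$ with the finer grading by powers of $V_1$ and $V_2$, to conclude that $V_2$ leaves $\mathcal W_{1,0}$ invariant.
\end{itemize}
I expect part (d) to be the real difficulty. It is exactly where one must rule out pairs in which $V_1,V_2$ fail to doubly commute while $(V_1^m,V_2^n)$ do. If part (d) cannot be closed, the next step would be to test BCL pairs $(M_{\phi_1},M_{\phi_2})$ built from a BCL triple $(\mathcal S,U,P)$ as in \eqref{BCLpairss2}. For such pairs both conditions become explicit relations among $P$ and $U$, so one can either verify that they agree or find an explicit counterexample to this step.
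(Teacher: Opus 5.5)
\textbf{Verdict.} There is a genuine gap: the converse, which is the only nontrivial part, is not proved.

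Your Steps~1 and~2 and the forward implication are correct. Step~1 is a self-contained replacement for the paper's appeal to the bidisc-core/doubly-commuting lemma.

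\textbf{Where Step~3 fails.} Step~3 is a plan rather than an argument.
\begin{itemize}
\item Part~(b) is unsupported. The paper's lemma on Wold pieces concerns only $V_1^mV_2^n$ with $m,n\ge 1$. It says nothing about $\mathcal H_u(V_1^m)=\mathcal H_u(V_1)$ or $\mathcal H_u(V_2^n)$ on their own, and for general commuting isometries $\mathcal H_u(V_1)$ need not reduce $V_2$.
\item Part~(d) is only a hope; no argument is given.
\item Your reformulation ``for $x\in\mathcal W_{1,0}$ show $V_1^*V_2x=0$, knowing only that $V_1^{*m}V_2^nx=0$'' throws away exactly the information needed. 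Read literally it is false: take $V_1=V_2=S$ the unilateral shift, $m=2$, $n=1$, $x=1$. Then $S^{*2}S1=0$ but $S^*S1=1$. The hypothesis has to be applied to $V_1^{m-1}x$, not to $x$.
\end{itemize}
You did locate the crux correctly. The paper's own proof also passes over it: after the Wold decomposition of $V_1^mV_2^n$ it asserts that it ``suffices'' to check $V_1^{*m}V_2^n=V_2^nV_1^{*m}$ on $\mathcal W_{m,n}$, without justification.

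\textbf{The missing argument.} It is short and needs no decomposition. For $x\in\mathcal W_{1,0}$ we have $V_1^{m-1}x\in\mathcal W_{m,0}$, so the hypothesis gives
\[
0=V_1^{*m}V_2^nV_1^{m-1}x=V_1^{*m}V_1^{m-1}V_2^nx=V_1^{*}V_2^nx ,
\]
that is, $V_2^n\mathcal W_{1,0}\subseteq\mathcal W_{1,0}$. Next, $\mathcal W_{1,0}$ is $V_2^*$-invariant, since $V_1^*V_2^*=V_2^*V_1^*$. So with $Q=P_{\mathcal W_{1,0}}$ one has $QV_2Q=QV_2$. Hence the compression $T=QV_2|_{\mathcal W_{1,0}}$ satisfies $T^n=QV_2^n|_{\mathcal W_{1,0}}=V_2^n|_{\mathcal W_{1,0}}$, which is an isometry. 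Since $\|T\|\le 1$,
\[
\|x\|=\|T^{n-1}Tx\|\le\|Tx\|=\|QV_2x\|\le\|V_2x\|=\|x\|,
\]
so $QV_2x=V_2x$, i.e.\ $V_1^*V_2x=0=V_2V_1^*x$. You already noted that $V_1^*V_2V_1=V_2=V_2V_1^*V_1$ on $\operatorname{ran}V_1$. Together these give $V_1^*V_2=V_2V_1^*$, and your Step~1 then yields $H(V_1V_2,V_2)\geq 0$. In particular, there is no BCL counterexample to look for.
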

\begin{proof}
    Assume that $H(V_1V_2, V_2)\geq 0.$ Then from Theorem \ref{Hartogs bidisc core relation} and \cite[Lemma~6.2]{MSS2019}, $V_1$ and $V_2$ are doubly commuting. Then for any $m, n \in \mathbb N,$ and $w \in \mathcal{W}_{m, 0},$
    $$V_1^{*m}V_2^nw = V_2^nV_1^{*m}w=0.$$
    Conversely, let $V_{0,n}(\mathcal{W}_{m,0}) \subseteq \mathcal{W}_{m, 0}$  for some $m,n \in \mathbb N.$ Let $\mathcal{H}=\mathcal{H}_u(V_{m, n}) \oplus \mathcal{H}_s(V_{m, n})$ be the Wold-von Neumann decomposition of $V_{m, n}=V_1^mV_2^n.$ By Lemma \ref{Doubly commuting lemma }(i), $\mathcal{H}_u(V_{m, n})$ and $\mathcal{H}_s(V_{m, n})$ are $V_1$ and $V_2$ reducing subspaces and $V_j|_{\mathcal{H}_u(V_{m, n})},$ $j=1, 2$ are unitary operators. Consequently, $V_1|_{\mathcal{H}_u(V_{m, n})}$ and $V_2|_{\mathcal{H}_u(V_{m, n})}$ are doubly commuting. 
    
    Now we claim that $V_1^*V_2=V_2V_1^*$ on $\mathcal{H}_s(V_{m, n}).$ Note that for any $k \in \mathbb N,$
    \beqn
(V_1^*V_2-V_2V_1^*)V_{m, n}^k&=&(V_1^*V_2-V_2V_1^*)V_1^{mk}V_2^{nk} \\ &=& V_1^{mk-1}V_2^{nk+1}-V_2^{nk+1}V_1^{mk-1}=0.
    \eeqn
    Hence, $V_1^*V_2=V_2V_1^*$ on $V_{m, n}^k\mathcal{W}_{m, n}$ for all $k \in \mathbb N.$ It now remains to show that $V_1^*V_2=V_2V_1^*$ on $\mathcal{W}_{m, n}.$ To show this, it suffices to show that $V_1^{*m}V_2^n=V_2^nV_1^{*m}$ on $\mathcal{W}_{m, n}.$ By Lemma \ref{wandering subspaces relation}, we have $\mathcal{W}_{m, n}=\mathcal{W}_{m, 0} \oplus V_{m, 0}\mathcal{W}_{0, n}.$ Let $w=w_1+V_1^m w_2 \in \mathcal{W}_{m, n}$ for some $w_1 \in \mathcal{W}_{m, 0}$ and $w_2 \in \mathcal{W}_{0, n}.$ Then
    \beqn
(V_1^{*m}V_2^n-V_2^nV_1^{*m})(w_1+V_1^m w_2)&=&V_1^{*m}V_2^n w_1 +V_1^{*m}V_2^n V_1^m w_2 \\ &-& V_2^nV_1^{*m}w_1- V_2^nV_1^{*m} V_1^m w_2, 
    \eeqn
    equivalently,
      \beqn
(V_1^{*m}V_2^n-V_2^nV_1^{*m})(w_1+V_1^m w_2)=0
    \eeqn
    as $V_{0, n} \mathcal{W}_{m, 0} \subseteq \mathcal{W}_{m, 0}.$ This completes the proof.
\end{proof}
\subsection{Definiteness of H-C}
 In this subsection, we study the definiteness of difference of the Hartogs triangle core operator and bidisc core operator. We first introduce two single operators which are related to the pair $(V_1, V_2)$ of commuting isometries (cf. \cite[p.~3]{HQY2015}). We define $J_1 \in  B(\mathcal W_{1,1})$ and $J_2\in  B(\mathcal W_{0,1})$ such that \begin{align*}
    J_1=P|_{\mathcal W_{1,1}}V_2|_{\mathcal W_{1,1}} \text{ and }  J_2=P|_{\mathcal W_{0,1}}V_1V_2|_{\mathcal W_{0,1}}.
\end{align*}

\begin{proposition} \label{frinfeisometry}
   The operator $J_1$ on $\mathcal W_{1, 1}$ is an isometry if and only if $V_2\mathcal W_{1, 1} \subseteq \mathcal W_{1, 1}.$ 
\end{proposition}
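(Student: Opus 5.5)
The proof is a direct Pythagorean argument. Here $P$ in the definition of $J_1$ is read as the orthogonal projection $P_{\mathcal W_{1,1}}$. With that reading, $J_1 w = P_{\mathcal W_{1,1}} V_2 w$ for $w \in \mathcal W_{1,1}$.

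Since $V_2$ is an isometry, for every $w\in\mathcal W_{1,1}$ we have the orthogonal splitting
\begin{align*}
\|w\|^2=\|V_2w\|^2=\|P_{\mathcal W_{1,1}}V_2w\|^2+\|(I-P_{\mathcal W_{1,1}})V_2w\|^2=\|J_1w\|^2+\|V_1V_2V_1^*V_2^*V_2w\|^2 .
\end{align*}
Here we use $I-P_{\mathcal W_{1,1}}=V_1V_2V_1^*V_2^*$. The last term simplifies to $\|V_1^*w\|^2$, because $V_2^*V_2=I$ and $V_1V_2$ is an isometry. This gives the identity
\begin{align*}
\|w\|^2-\|J_1w\|^2=\|(I-P_{\mathcal W_{1,1}})V_2w\|^2 ,\qquad w\in\mathcal W_{1,1}.
\end{align*}

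For the forward direction, suppose $J_1$ is an isometry. Then the identity forces $(I-P_{\mathcal W_{1,1}})V_2w=0$ for every $w\in\mathcal W_{1,1}$. Hence $V_2w\in\mathcal W_{1,1}$, that is, $V_2\mathcal W_{1,1}\subseteq\mathcal W_{1,1}$.

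For the converse, suppose $V_2\mathcal W_{1,1}\subseteq\mathcal W_{1,1}$. Then $J_1w=V_2w$ for all $w\in\mathcal W_{1,1}$, so $\|J_1w\|=\|V_2w\|=\|w\|$ and $J_1$ is an isometry.

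There is no real obstacle here. The only point needing care is reading $P|_{\mathcal W_{1,1}}$ as the projection onto $\mathcal W_{1,1}$ and identifying its complement with $V_1V_2V_1^*V_2^*$. Once that is fixed, the argument is just Pythagoras together with the isometry of $V_2$.
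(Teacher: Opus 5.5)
Your proof is correct and is essentially the paper's argument. The paper derives the operator identity $I_{\mathcal W_{1,1}}-J_1^*J_1=(P_{\mathcal W_{1,1}^\perp}V_2|_{\mathcal W_{1,1}})^*P_{\mathcal W_{1,1}^\perp}V_2|_{\mathcal W_{1,1}}$, and your Pythagorean norm identity is its quadratic-form version. Your unused side remark that the defect equals $\|V_1^*w\|^2$ is also correct; it is the link to the unitarity of $V_2$ in Theorem \ref{mainthmiso}.
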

\begin{proof}
    Since $J_1^*J_1=P|_{\mathcal W_{1,1}}V_2^*|_{\mathcal W_{1,1}}P|_{\mathcal W_{1,1}}V_2|_{\mathcal W_{1,1}},$ we have 
    \begin{align} \label{I-j1j1}
       I_{\mathcal W_{1,1}}-J_1^*J_1=I_{\mathcal W_{1,1}}-P|_{\mathcal W_{1,1}}V_2^*|_{\mathcal W_{1,1}}P|_{\mathcal W_{1,1}}V_2|_{\mathcal W_{1,1}}.
    \end{align}
Using $P_{\mathcal W_{1,1}} + P_{\mathcal W_{1,1}^\perp} = I$ and the fact that $V_2$ is an isometry, we obtain
\begin{align*}
I_{\mathcal W_{1,1}}
&= P_{\mathcal W_{1,1}} V_2^* V_2|_{\mathcal W_{1,1}} \\
&= P_{\mathcal W_{1,1}} V_2^* P_{\mathcal W_{1,1}} V_2|_{\mathcal W_{1,1}}
+ P_{\mathcal W_{1,1}} V_2^* P_{\mathcal W_{1,1}^\perp} V_2|_{\mathcal W_{1,1}}.
\end{align*}
This combined with \eqref{I-j1j1} yields that
$$
I_{\mathcal W_{1,1}} - J_1^*J_1
=
P_{\mathcal W_{1,1}} V_2^* P_{\mathcal W_{1,1}^\perp} V_2|_{\mathcal W_{1,1}}=( P_{\mathcal W_{1,1}^\perp} V_2|_{\mathcal W_{1,1}})^* P_{\mathcal W_{1,1}^\perp} V_2|_{\mathcal W_{1,1}}.
$$
We now obtain that $J_1$ is an isometry if and only if $P_{\mathcal W_{1,1}^\perp} V_2|_{\mathcal W_{1,1}}=0.$ This completes the proof.
\end{proof}
Similary, one can show that the operator $J_2$ on $\mathcal W_{0, 1}$ is an isometry if and only if $V_1V_2\mathcal W_{0, 1} \subseteq \mathcal W_{0, 1}.$ Note that $(V_1V_2, V_2)$ is doubly commuting if and only if $V_2$ is unitary. This combined with Remark \ref{H-C relation}, Proposition \ref{frinfeisometry} and \cite[Lemma~6.2]{MSS2019} yields the following result.
\begin{theorem} \label{mainthmiso}
    Let $(V_1, V_2)$ be a pair of commuting isometries on $\mathcal H.$ Then the following are equivalent:
    \begin{enumerate}
    \item[$\mathrm{(i)}$] $H(V_1, V_2)-C(V_1, V_2) = 0.$
       \item [$\mathrm{(ii)}$]  $H(V_1, V_2)-C(V_1, V_2) \geq 0$ or $H(V_1, V_2)-C(V_1, V_2) \leq 0.$
        \item [$\mathrm{(iii)}$] $V_2$ is unitary.
        \item [$\mathrm{(iv)}$] $(V_1V_2, V_2)$ is doubly commuting.
        \item[$\mathrm{(v)}$] $H(V_1, V_2)-C(V_1, V_2)$ is a projection or negative of a projection.
        \item[$\mathrm{(vi)}$] The operator $J_1$ is an isometry.
        \item[$\mathrm{(vii)}$] The operator $J_2$ is an isometry.      
    \end{enumerate}
\end{theorem}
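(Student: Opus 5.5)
The whole statement rests on the identity in Lemma \ref{C-H relation}(i):
\[
H(V_1,V_2)-C(V_1,V_2)=P_{V_2\mathcal W_{0,1}}-P_{\mathcal W_{0,1}} .
\]
The two subspaces $\mathcal W_{0,1}$ and $V_2\mathcal W_{0,1}$ are mutually orthogonal. So on $\mathcal W_{0,1}\oplus V_2\mathcal W_{0,1}$ this difference acts as $(-I)\oplus I$, and it vanishes on the orthogonal complement. I will use (iii) as the hub and show each other condition is equivalent to it.

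For (i), (ii), (iii) and (v): if $V_2$ is unitary, then $\mathcal W_{0,1}=\{0\}$, so $H-C=0$. This gives (iii)$\Rightarrow$(i), and (i)$\Rightarrow$(ii) and (i)$\Rightarrow$(v) are trivial. Now suppose $\mathcal W_{0,1}\neq\{0\}$. Remark \ref{H-C relation} supplies vectors $h,k$ (take $h\in V_2\mathcal W_{0,1}$ and $k\in\mathcal W_{0,1}$ nonzero) on which $H-C$ takes a strictly positive and a strictly negative value. Then neither semidefiniteness holds, and $H-C$ cannot be $\pm$ a projection, because $\pm$ a projection is semidefinite. This proves (ii)$\Rightarrow$(iii) and (v)$\Rightarrow$(iii).

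For (iv): if $V_2$ is unitary, then $V_2^*$ commutes with $V_1V_2$ by Fuglede's theorem, or simply because $V_2^*=V_2^{-1}$ commutes with $V_1$. Hence $(V_1V_2,V_2)$ is doubly commuting. Conversely, assume $(V_1V_2,V_2)$ is doubly commuting, so $V_2^*V_1V_2=V_1V_2V_2^*$. The left side equals $V_1$, so $V_1=V_1V_2V_2^*$. Multiplying on the left by $V_1^*$ gives $V_2V_2^*=I$, so $V_2$ is unitary. I could alternatively route this through Theorem \ref{Hartogs bidisc core relation} and \cite[Lemma~6.2]{MSS2019}, but the direct computation is cleaner.

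For (vi): by Proposition \ref{frinfeisometry}, $J_1$ is an isometry iff $V_2\mathcal W_{1,1}\subseteq\mathcal W_{1,1}$. If $V_2$ is unitary, then $V_1^*V_2^*V_2w=V_1^*w$ for $w\in\mathcal W_{1,1}$. I expect this step to be the main obstacle, because controlling $V_1^*$ requires care. The clean route is the identity $V_{1,1}^*V_2=V_2^*V_2V_1^*V_2^*V_2$, which does not simplify directly. Instead I use that $V_2$ unitary gives $\mathcal W_{1,1}=\mathcal W(V_1V_2)=V_2^{-1}$-shifted: since $V_1V_2\mathcal H=V_2V_1\mathcal H$, we get $\mathcal W_{1,1}=\mathcal H\ominus V_2V_1\mathcal H=V_2(\mathcal H\ominus V_1\mathcal H)=V_2\mathcal W_{1,0}$, because $V_2$ is unitary. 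The same reasoning applied to $V_2\mathcal W_{1,1}$ gives $V_2^2\mathcal W_{1,0}=V_2(\mathcal H\ominus V_1V_2\mathcal H)$. Moreover $V_1V_2\mathcal H=V_1\mathcal H$, so this last space is $V_2\mathcal W_{1,0}=\mathcal W_{1,1}$. Hence $V_2\mathcal W_{1,1}=\mathcal W_{1,1}$ and $J_1$ is an isometry.

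For the converse, use the decomposition $\mathcal W_{1,1}=\mathcal W_{0,1}\oplus V_2\mathcal W_{1,0}$ from Lemma \ref{wandering subspaces relation}(iii). Take $x\in\mathcal W_{0,1}\subseteq\mathcal W_{1,1}$. Then $V_2x\in\mathcal W_{1,1}$, i.e.\ $V_1^*V_2^*V_2x=V_1^*x=0$. So $\mathcal W_{0,1}\subseteq\mathcal W_{1,0}$, and iterating gives $V_2^k\mathcal W_{0,1}\subseteq\mathcal W_{1,1}$ for all $k$. These spaces $V_2^k\mathcal W_{0,1}$ are mutually orthogonal, and each is contained in $\mathcal W_{1,1}\cap V_2\mathcal H$ for $k\geq1$. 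I would then compare with $\mathcal W_{1,1}\ominus\mathcal W_{0,1}=V_2\mathcal W_{1,0}$ and try to force $\mathcal W_{0,1}=\{0\}$ by a norm or dimension-free argument, testing on $V_2 x$ with $x\in\mathcal W_{0,1}$. If this becomes delicate, I would fall back on Remark \ref{H-C relation}-type reasoning as in the paper's hint, linking it to (iv) via \cite[Lemma~6.2]{MSS2019}.

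For (vii), $J_2$ is an isometry iff $V_1V_2\mathcal W_{0,1}\subseteq\mathcal W_{0,1}$. Take $x\in\mathcal W_{0,1}$. Then $V_1V_2x\in\mathcal W_{0,1}$ means $V_2^*V_1V_2x=V_1x=0$, so $x=0$ and $V_2$ is unitary. Conversely, if $V_2$ is unitary, then $\mathcal W_{0,1}=\{0\}$ and $J_2$ is trivially an isometry on the zero space.
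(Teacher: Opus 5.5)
Your treatment of (i), (ii), (iii) and (v) via Lemma~\ref{C-H relation}(i) is correct. So are your direct computation $V_2^*V_1V_2=V_1$ for (iii)$\Leftrightarrow$(iv), your argument for (vii), and the (somewhat roundabout) proof of (iii)$\Rightarrow$(vi). The genuine gap is \emph{(vi)$\Rightarrow$(iii)}: you never derive $\mathcal W_{0,1}=\{0\}$ from $V_2\mathcal W_{1,1}\subseteq\mathcal W_{1,1}$.

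Testing invariance only on $x\in\mathcal W_{0,1}$ gives $\mathcal W_{0,1}\subseteq\mathcal W_{1,0}$. That conclusion is compatible with a non-unitary $V_2$; it holds, for instance, for $(M_1M_2,M_2)$ on $H^2(\mathbb D^2)$. Iterating along $V_2^k\mathcal W_{0,1}$ only yields more statements of the same kind, namely $V_1^*V_2^k\mathcal W_{0,1}=\{0\}$, and these do not by themselves contradict non-unitarity. The ``norm or dimension-free argument'' and the fallback to \cite[Lemma~6.2]{MSS2019} are left unspecified, so as written this implication is unproved.

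The missing idea is to test on the other summand of $\mathcal W_{1,1}$. Lemma~\ref{wandering subspaces relation}(iii) with $m=n=1$ gives $\mathcal W_{1,1}=\mathcal W_{1,0}\oplus V_1\mathcal W_{0,1}$. (The decomposition $\mathcal W_{0,1}\oplus V_2\mathcal W_{1,0}$ you quote is part (ii).) The identity you dismissed as not simplifying is exactly what is needed: $V_{1,1}^*V_2=V_1^*V_2^*V_2=V_1^*$. It shows that for every $w\in\mathcal W_{1,1}$, one has $V_2w\in\mathcal W_{1,1}$ if and only if $V_1^*w=0$. Now take $x\in\mathcal W_{0,1}$ and set $w=V_1x$. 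Then $w\in\mathcal W_{1,1}$, since $V_1^*V_2^*V_1x=V_2^*x=0$. Invariance forces $x=V_1^*w=0$, so $\mathcal W_{0,1}=\{0\}$ and $V_2$ is unitary.

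An equivalent route, closer to the paper's one-line appeal to Proposition~\ref{frinfeisometry} and \cite[Lemma~6.2]{MSS2019}, goes through (iv). For commuting isometries $T,S$ one always has $T^*ST=S=ST^*T$, so $T^*S$ and $ST^*$ agree on $T\mathcal H$. Hence $T^*S=ST^*$ if and only if $S\mathcal W(T)\subseteq\mathcal W(T)$. With $T=V_1V_2$ and $S=V_2$, this says (vi)$\Leftrightarrow$(iv), which you have already shown is equivalent to (iii). The same observation with $T=V_2$ and $S=V_1V_2$ recovers your argument for (vii).
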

\section{On completely non-unitary isometric pairs via BCL theorem} \label{S4}
We now consider completely non-unitary isometric pairs. To this end, we compute the Hartogs triangle core operator for the BCL pair as in \eqref{BCLpairss2}. Although the calculations are simple, they are crucial for the results and analysis presented later in this section.
\begin{lemma} \label{BCLHartogs}
For the BCL pair $(M_{\phi_1}, M_{\phi_2})$ as in \eqref{BCLpairss2}, we have the following:
\begin{align} \label{Hmphi1mphi2}
H(M_{\phi_1}, M_{\phi_2})
&=P_{\mathbb C}\otimes (UP^\perp UPU^*P^\perp U^*-P)+
P_{\mathbb C}S_z^*\otimes UP^\perp UPU^*PU^* \notag
\\
&\quad+
S_zP_{\mathbb C}\otimes UPUPU^*P^\perp U^*
+
S_zP_{\mathbb C}S_z^*\otimes UPUPU^*PU^*.
\end{align}
\end{lemma}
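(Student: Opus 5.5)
Lemma \ref{C-H relation}(ii) reduces the computation to
\[
H(M_{\phi_1},M_{\phi_2})=M_{\phi_2}(I-M_{\phi_2}M_{\phi_2}^*)M_{\phi_2}^*-M_{\phi_1}(I-M_{\phi_2}M_{\phi_2}^*)M_{\phi_1}^*,
\]
so the first step is to compute the defect projection $I-M_{\phi_2}M_{\phi_2}^*$ in tensor form. Write $A=UP^\perp$ and $B=UP$, so $M_{\phi_2}=I\otimes A+S_z\otimes B$. Then
\[
M_{\phi_2}M_{\phi_2}^*=I\otimes AA^*+S_zS_z^*\otimes BB^*+S_z\otimes BA^*+S_z^*\otimes AB^*.
\]
Since $A^*B=P^\perp U^*UP=0$, the cross terms $BA^*$ and $AB^*$ vanish. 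Next, $AA^*=UP^\perp U^*$ and $BB^*=UPU^*$ sum to $I$, and $S_zS_z^*=I-P_{\mathbb C}$. Hence
\[
I-M_{\phi_2}M_{\phi_2}^*=P_{\mathbb C}\otimes UPU^*.
\]

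The second step is to conjugate this by $M_{\phi_2}$ and by $M_{\phi_1}$. The only identities needed are $S_z^*P_{\mathbb C}=0$ (a constant times $z$ has no constant part, so $P_{\mathbb C}S_z=0$ and hence its adjoint vanishes) and $I\cdot P_{\mathbb C}\cdot I=P_{\mathbb C}$.

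\textbf{Conjugation by $M_{\phi_2}$.} This produces four terms:
\[
P_{\mathbb C}\otimes AUPU^*A^*,\qquad P_{\mathbb C}S_z^*\otimes AUPU^*B^*,\qquad S_zP_{\mathbb C}\otimes BUPU^*A^*,\qquad S_zP_{\mathbb C}S_z^*\otimes BUPU^*B^*.
\]
Substituting $A=UP^\perp$ and $B=UP$, these are
\[
UP^\perp UPU^*P^\perp U^*,\qquad UP^\perp UPU^*PU^*,\qquad UPUPU^*P^\perp U^*,\qquad UPUPU^*PU^*,
\]
respectively, which are exactly the four operator coefficients in \eqref{Hmphi1mphi2}.

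\textbf{Conjugation by $M_{\phi_1}$.} Write $M_{\phi_1}=I\otimes C+S_z\otimes D$ with $C=PU^*$ and $D=P^\perp U^*$. The terms containing $S_z$ on the left of $P_{\mathbb C}$ carry the factor $D\,UPU^*=P^\perp U^*UPU^*=P^\perp PU^*=0$, so they drop out; likewise their adjoints drop out on the right. What remains is the single term
\[
P_{\mathbb C}\otimes C\,UPU^*\,C^*=P_{\mathbb C}\otimes PU^*UPU^*UP=P_{\mathbb C}\otimes P.
\]
Subtracting this from the first constant coefficient gives the displayed formula.

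There is no genuine obstacle; this is bookkeeping. The only points needing care are the order of the tensor factors in the mixed terms ($P_{\mathbb C}S_z^*$ versus $S_zP_{\mathbb C}$, with each adjoint attached to the correct side) and confirming that the cross terms in $M_{\phi_2}M_{\phi_2}^*$ and the $D$-terms in the $M_{\phi_1}$ conjugation vanish.
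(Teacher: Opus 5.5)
Your computation is correct, and it groups $H$ differently from the paper.

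\textbf{The paper's route.} The paper computes both range projections, $M_{\phi_1}M_{\phi_1}^*=I-P_{\mathbb C}\otimes P^\perp$ and $M_{\phi_2}M_{\phi_2}^*=I-P_{\mathbb C}\otimes UPU^*$. It then uses $M_{\phi_1}M_{\phi_2}=M_{\phi_2}M_{\phi_1}$ to write $H=X-M_{\phi_2}XM_{\phi_2}^*$ with $X=M_{\phi_2}M_{\phi_2}^*-M_{\phi_1}M_{\phi_1}^*=P_{\mathbb C}\otimes(P^\perp-UPU^*)$. The $-P$ in the constant coefficient then comes from the cancellation $P^\perp-UPU^*-UP^\perp U^*=-P$.

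\textbf{Your route.} You start from Lemma~\ref{C-H relation}(ii), so you need only the single defect $I-M_{\phi_2}M_{\phi_2}^*=P_{\mathbb C}\otimes UPU^*$. You conjugate it once by $M_{\phi_2}$ and once by $M_{\phi_1}$. The $M_{\phi_1}$-conjugation collapses to $P_{\mathbb C}\otimes P$ because $P^\perp U^*\,UPU^*=0$.

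\textbf{Comparison.} Your route is slightly shorter and more transparent. It shows the four-term block as the projection onto $M_{\phi_2}\mathcal W_{0,1}$. It also identifies $M_{\phi_1}\mathcal W_{0,1}=\mathbb C\otimes P\mathcal S$, since $M_{\phi_1}(1\otimes UPs)=1\otimes Ps$. That is exactly the form $H=P_{M_{\phi_2}\mathcal W_{0,1}}-P_{M_{\phi_1}\mathcal W_{0,1}}$ the paper uses later in the section. The paper's route instead gives the description $\ker M_{\phi_1}^*=\mathbb C\otimes P^\perp\mathcal S$ as a by-product.

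\textbf{Small corrections.}
\begin{enumerate}
\item The cross terms in $M_{\phi_2}M_{\phi_2}^*$ are $AB^*=UP^\perp PU^*$ and $BA^*=UPP^\perp U^*$. They vanish simply because $P^\perp P=0$. The identity $A^*B=P^\perp U^*UP=0$ that you cite is a different product, and in general it does not imply $AB^*=0$.
\item The identity $S_z^*P_{\mathbb C}=0$ is never actually used. The four products $P_{\mathbb C}$, $P_{\mathbb C}S_z^*$, $S_zP_{\mathbb C}$, $S_zP_{\mathbb C}S_z^*$ are simply kept as they are.
\item The paper later reserves $A,B,C,D$ for the coefficients in \eqref{ABCD}, so rename your auxiliary operators $UP^\perp$, $UP$, $PU^*$, $P^\perp U^*$ to avoid a clash.
\end{enumerate}
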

\begin{proof}
Since $M_{\phi_1}^*
=
I\otimes UP
+
S_z^*\otimes UP^\perp,$ we have
\begin{align} \label{Mphi1}
M_{\phi_1}M_{\phi_1}^*
&=\left(I\otimes PU^*+S_z\otimes P^\perp U^*\right)
\left(I\otimes UP+S_z^*\otimes UP^\perp\right) \notag
\\
&= I\otimes P
+S_zS_z^*\otimes P^\perp \notag
\\
&= I-P_{\mathbb C}\otimes P^\perp.
\end{align}
Also, since $M_{\phi_2}^*
=
I\otimes P^\perp U^*
+
S_z^*\otimes PU^*,$ we get
\begin{align} \label{Mphi2}
M_{\phi_2}M_{\phi_2}^* \notag
&=
(I\otimes UP^\perp
+
S_z\otimes UP)
(I\otimes P^\perp U^*
+
S_z^*\otimes PU^*) \notag
\\
&=I\otimes UP^\perp P^\perp U^*
+S_z^*\otimes UP^\perp PU^*\notag
\\
&\quad
+S_z\otimes UPP^\perp U^*
+S_zS_z^*\otimes UPPU^*\notag
\\
&= I-P_{\mathbb C}\otimes UPU^*.
\end{align}
From \eqref{Mphi1} and $\eqref{Mphi2},$ we have 
\begin{align} \label{phi1phi2 diff}
   M_{\phi_1}M_{\phi_1}^*-M_{\phi_2}M_{\phi_2}^*
=
P_{\mathbb C}\otimes(-P^\perp+UPU^*). 
\end{align}
Using this, we have
\begin{align} \label{mixed difference}
-M_{\phi_2}^2M_{\phi_2}^{*2}+M_{\phi_1}M_{\phi_2}M_{\phi_1}^*M_{\phi_2}^*&=M_{\phi_2}[M_{\phi_1}M_{\phi_1}^{*}-M_{\phi_2}M_{\phi_2}^{*}]M_{\phi_2}^{*} \notag
\\
&=
\,P_{\mathbb C}\otimes (UP^\perp UPU^*P^\perp U^*-UP^\perp U^*) \notag
\\& \quad+
P_{\mathbb C}S_z^*\otimes UP^\perp UPU^*PU^* \notag
\\
&\quad+
S_zP_{\mathbb C}\otimes UPUPU^*P^\perp U^* \notag
\\& \quad+
S_zP_{\mathbb C}S_z^*\otimes UPUPU^*PU^* .
\end{align}

By using \eqref{phi1phi2 diff} and \eqref{mixed difference}, we have \eqref{Hmphi1mphi2}. This completes the proof.
\end{proof}
For the future use, we introduce the following notations:
\begin{align} \label{ABCD}
A&:=UP^\perp UPU^*P^\perp U^*-P,
\quad B:=UP^\perp UPU^*PU^*,\notag \\
C&:=UPUPU^*P^\perp U^*, \quad
D:=UPUPU^*PU^*,
\end{align}
where $\mathcal S,U$ and $P$ as in BCL triple. The following result characterizes when the $\triangle_H$-core operator of a BCL pair vanishes.
\begin{theorem} \label{pure=0}
    $H(M_{\phi_1},M_{\phi_2})=0$ if and only if $PUP=0$ and $U^2PU^{*2}=P.$
\end{theorem}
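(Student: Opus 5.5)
The plan is to read off the vanishing of $H(M_{\phi_1},M_{\phi_2})$ from the formula \eqref{Hmphi1mphi2} of Lemma \ref{BCLHartogs}, written with the notation \eqref{ABCD} as
\begin{align*}
H(M_{\phi_1},M_{\phi_2})=P_{\mathbb C}\otimes A+P_{\mathbb C}S_z^*\otimes B+S_zP_{\mathbb C}\otimes C+S_zP_{\mathbb C}S_z^*\otimes D .
\end{align*}
The first step is to separate the four terms. Let $e_0=1$ and $e_1=z$ in $H^2(\mathbb T)$. Then $P_{\mathbb C}$, $P_{\mathbb C}S_z^*$, $S_zP_{\mathbb C}$ and $S_zP_{\mathbb C}S_z^*$ are the rank-one matrix units $e_0\otimes e_0$, $e_0\otimes e_1$, $e_1\otimes e_0$ and $e_1\otimes e_1$. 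Compressing $H(M_{\phi_1},M_{\phi_2})$ between $e_i\otimes\mathcal S$ and $e_j\otimes\mathcal S$ therefore isolates exactly one of $A,B,C,D$. Consequently $H(M_{\phi_1},M_{\phi_2})=0$ if and only if $A=B=C=D=0$.

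The next step is to translate these four conditions into conditions on $U$ and $P$.

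\emph{From $D$.} We have $D=U(PUP)(PU^*P)U^*=U(PUP)(PUP)^*U^*$. Since $U$ is unitary, $D=0$ if and only if $(PUP)(PUP)^*=0$, that is, if and only if $PUP=0$.

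\emph{From $B$ and $C$, assuming $PUP=0$.} Then also $PU^*P=(PUP)^*=0$. Hence
\begin{align*}
B=UP^\perp U(PU^*P)U^*=0,\qquad C=U(PUP)U^*P^\perp U^*=0 .
\end{align*}

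\emph{From $A$, assuming $PUP=0$.} The identity $PUP=0$ gives $UP=P^\perp UP$, and taking adjoints gives $PU^*=PU^*P^\perp$. Therefore $P^\perp UPU^*P^\perp=UPU^*$, and so
\begin{align*}
A=U(UPU^*)U^*-P=U^2PU^{*2}-P .
\end{align*}

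Both directions now follow. If $H(M_{\phi_1},M_{\phi_2})=0$, then $D=0$ gives $PUP=0$, and then $A=0$ gives $U^2PU^{*2}=P$. Conversely, if $PUP=0$ and $U^2PU^{*2}=P$, the computations above show that $A,B,C,D$ all vanish, so $H(M_{\phi_1},M_{\phi_2})=0$.

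The only point needing care is the first step, namely that the four tensor terms are independent so that vanishing of the sum forces each coefficient to vanish. The matrix-unit compression handles this directly. The rest is routine algebra built on $D=0\iff PUP=0$.
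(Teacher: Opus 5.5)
Your proof is correct and matches the paper's argument step for step: reduce $H(M_{\phi_1},M_{\phi_2})=0$ to $A=B=C=D=0$, then extract $PUP=0$ from $D=U(PUP)(PUP)^*U^*$, which kills $B$ and $C$ and turns $A$ into $U^2PU^{*2}-P$. Your matrix-unit compression argument is a more explicit justification of the step the paper phrases as linear independence of $P_{\mathbb C}$, $P_{\mathbb C}S_z^*$, $S_zP_{\mathbb C}$ and $S_zP_{\mathbb C}S_z^*$.
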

\begin{proof}
By Lemma \ref{BCLHartogs} and \eqref{ABCD}, we have
\begin{align*}
H(M_{\phi_1},M_{\phi_2})
=
P_{\mathbb C}\otimes A
+
P_{\mathbb C}S_z^*\otimes B
+
S_zP_{\mathbb C}\otimes C
+
S_zP_{\mathbb C}S_z^*\otimes D.
\end{align*}
It is easy to see that 
$H(M_{\phi_1},M_{\phi_2})=0$ on $z^2H^2(\mathbb T) \otimes \mathcal S.$ Therefore, it is enough to consider $H(M_{\phi_1},M_{\phi_2})$ on $\operatorname{span}\{1, z\}\otimes \mathcal S.$ It is easy to see that 
$$
\Bigl\{
P_{\mathbb C},
\,
P_{\mathbb C}S_z^*,
\,
S_zP_{\mathbb C},
\,
S_zP_{\mathbb C}S_z^*
\Bigr\}
$$
are linearly independent operators on the subspace $\{1,z\}.$
Hence
\begin{align}\label{Hiff0}
    H(M_{\phi_1}, M_{\phi_2})=0
\iff
A=B=C=D=0.
\end{align}
In particular, $D=UPUPU^*PU^*=0$ yields that $PUPU^*P=0.$ Since $PUPU^*P=(PUP)(PUP)^*,$ we have
\begin{align} \label{PUP0}
   PUP=0. 
\end{align}
This further yields that $$B=UP^\perp UPU^*PU^*=0
\mbox{ and }C=UPUPU^*P^\perp U^*=0.$$
Now, by \eqref{PUP0}
\begin{align} \label{Pupup}
 P^\perp UP=UP.   
\end{align}
By \eqref{PUP0} and \eqref{Pupup}, we have 
\begin{align*}
U(P^\perp UP)U^*P^\perp U^* =UUPU^*P^\perp U^*
=UU(P^\perp UP)^* U^*
=U^2PU^{*2}.
\end{align*}
This combined with \eqref{ABCD} gives that $A=U^2PU^{*2}-P.$ It now follows from \eqref{Hiff0} and \eqref{PUP0} that 
$$
H(M_{\phi_1}, M_{\phi_2})=0
\iff
\begin{cases}
PUP=0,\\
U^2PU^{*2}=P.
\end{cases}
$$
This completes the proof.
\end{proof}
It is worth mentioning that $PUP=0$ and $U^2PU^{*2}=P$ may not yield that $P=0.$ For instance, $$
\mathcal S=\mathbb C^2,
\qquad
U=
\begin{pmatrix}
0 & 1\\
1 & 0
\end{pmatrix},
\qquad
P=
\begin{pmatrix}
1 & 0\\
0 & 0
\end{pmatrix}.
$$
\begin{remark}
Unlike the case of bidisc core operator (see \cite[Proposition~3.1]{HQY2015}), the commutativity of $U$ and $P$ may not yield $H(M_{\phi_1}, M_{\phi_2})=0$ unless $P=0.$ Also, $H(M_{\phi_1}, M_{\phi_2})$ is indefinite whenever $P\neq 0$ and $UP=PU.$
\end{remark}
The following result concerns definiteness of $H(M_{\phi_1},M_{\phi_2}).$
\begin{proposition} \label{H>0}
    $H(M_{\phi_1},M_{\phi_2})\geq0$ if and only if $A\geq 0, D \geq0$ and $|\langle Bs,s\rangle|^2
    \le
    \langle As,s\rangle \langle Ds,s\rangle$ for $s \in \mathcal S,$ where $A,B,D$ as in \eqref{ABCD}. 
\end{proposition}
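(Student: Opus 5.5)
The plan is to reduce positivity of $H(M_{\phi_1},M_{\phi_2})$ to positivity of a $2\times 2$ operator matrix on $\mathcal S\oplus\mathcal S$.

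\textbf{Reduction.} By Lemma \ref{BCLHartogs} and \eqref{ABCD},
\[
H(M_{\phi_1},M_{\phi_2})=P_{\mathbb C}\otimes A+P_{\mathbb C}S_z^*\otimes B+S_zP_{\mathbb C}\otimes C+S_zP_{\mathbb C}S_z^*\otimes D .
\]
As observed in the proof of Theorem \ref{pure=0}, this operator vanishes on $z^2H^2(\mathbb T)\otimes\mathcal S$ and has range in $\operatorname{span}\{1,z\}\otimes\mathcal S$. Identify $\operatorname{span}\{1,z\}\otimes\mathcal S$ with $\mathcal S\oplus\mathcal S$ via $1\otimes s+z\otimes t\mapsto (s,t)$. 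Under this identification $H(M_{\phi_1},M_{\phi_2})$ becomes the block matrix $\begin{pmatrix} A & B\\ C & D\end{pmatrix}$. Moreover $C=B^*$: indeed $B^*=UPUPU^*P^\perp U^*=C$. Also $A$ and $D$ are self-adjoint. Hence
\[
\langle H(M_{\phi_1},M_{\phi_2})(1\otimes s+z\otimes t),\,1\otimes s+z\otimes t\rangle=\langle As,s\rangle+2\,\mathrm{Re}\langle Bt,s\rangle+\langle Dt,t\rangle ,
\]
and $H(M_{\phi_1},M_{\phi_2})\geq 0$ if and only if this expression is nonnegative for all $s,t\in\mathcal S$.

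\textbf{Necessity.} Taking $t=0$ gives $A\geq 0$, and taking $s=0$ gives $D\geq 0$. Next take $t=\lambda s$ with $\lambda\in\mathbb C$. The quadratic form
\[
\langle As,s\rangle+2\,\mathrm{Re}\bigl(\lambda\langle Bs,s\rangle\bigr)+|\lambda|^2\langle Ds,s\rangle
\]
is then nonnegative for every $\lambda$. Choosing the phase of $\lambda$ so that the middle term equals $-2|\lambda||\langle Bs,s\rangle|$, and using the discriminant condition for a real quadratic in $|\lambda|$, gives $|\langle Bs,s\rangle|^2\le\langle As,s\rangle\langle Ds,s\rangle$.

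\textbf{Sufficiency.} Here we must control the mixed term $\langle Bt,s\rangle$ with $s\neq t$, while the hypothesis only constrains the diagonal values. I would exploit the specific factorized structure of $B$ and $D$. Put $W=UPUP$ and $X=UP^\perp UP$. Then $W^*=PU^*PU^*$, so
\[
D=WW^*\quad\text{and}\quad B=XW^*.
\]
Consequently $|\langle Bt,s\rangle|=|\langle W^*t,X^*s\rangle|\le \langle Dt,t\rangle^{1/2}\|X^*s\|$. The quadratic form is therefore at least
\[
\langle As,s\rangle-2\|X^*s\|\langle Dt,t\rangle^{1/2}+\langle Dt,t\rangle ,
\]
which is nonnegative as soon as $\|X^*s\|^2\le\langle As,s\rangle$.

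The main step, and the one I expect to be the obstacle, is to pass from the diagonal inequality $|\langle Bs,s\rangle|^2\le\langle As,s\rangle\langle Ds,s\rangle$ to the operator inequality $A\geq XX^*$, or to some substitute bound on $|\langle Bt,s\rangle|$. The first attempt will be to substitute $s\mapsto s+\mu t$ in the diagonal inequality and polarize, using $A\geq0$, $D\geq 0$ and the explicit forms $A=XU^*P^\perp U^*-P$ and $XX^*=UP^\perp UPUP^\perp U^*$. Should this fail in general, I would fall back on the general criterion $|\langle Bt,s\rangle|^2\le\langle As,s\rangle\langle Dt,t\rangle$ for all $s,t\in\mathcal S$ (equivalently, $B=A^{1/2}KD^{1/2}$ for a contraction $K$), and verify that the stated condition is equivalent to it for these specific $A,B,D$.
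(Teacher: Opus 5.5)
Your reduction to the block form $\begin{pmatrix} A & B\\ B^* & D\end{pmatrix}$ on $\mathcal S\oplus\mathcal S$ and your necessity argument are correct. You are also right that the diagonal hypothesis $|\langle Bs,s\rangle|^2\le\langle As,s\rangle\langle Ds,s\rangle$ does not by itself control the mixed term $\langle Bt,s\rangle$ for $s\neq t$. The paper's argument evaluates the form only on vectors $(\alpha+\beta z)\otimes s$, so it passes over exactly this point. However, your sufficiency step does not close, and the route you propose cannot work. Since $X^*=PU^*P^\perp U^*$, you get $XX^*=UP^\perp UPU^*P^\perp U^*$, not $UP^\perp UPUP^\perp U^*$. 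Hence $A=XX^*-P$ identically. Your target inequality $A\geq XX^*$ is therefore equivalent to $P=0$, and your fallback criterion is just a restatement of what has to be proved.

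The missing idea is the isometry identity $X^*X+W^*W=P$. It holds because $\|P^\perp UPq\|^2+\|PUPq\|^2=\|Pq\|^2$, which says that $M_{\phi_2}(1\otimes UPq)=1\otimes Xq+z\otimes Wq$ is norm-preserving on $\mathcal W_{0,1}=1\otimes U\operatorname{ran}P$. In particular $\|X\|\le 1$.

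Now assume $A\geq 0$ and take $p\in\operatorname{ran}P$. Then $\|p\|^2\le\|X^*p\|^2\le\|p\|^2$, so $XX^*p=p$. This gives $XX^*P=P=PXX^*$, and hence $A=P^\perp XX^*P^\perp$. Next, $q=X^*p\in\operatorname{ran}P$ satisfies $\|Xq\|=\|p\|=\|q\|=\|Pq\|$. Therefore $\|Wq\|^2=\|Pq\|^2-\|Xq\|^2=0$, so $WX^*P=0$, equivalently $PB=0$. Consequently $\langle Bt,s\rangle=\langle W^*t,X^*P^\perp s\rangle$, and
\[
|\langle Bt,s\rangle|^2\le\langle Dt,t\rangle\,\|X^*P^\perp s\|^2=\langle Dt,t\rangle\langle As,s\rangle .
\]
So your quadratic form is at least $\bigl(\langle As,s\rangle^{1/2}-\langle Dt,t\rangle^{1/2}\bigr)^2\geq 0$.

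Thus $A\geq 0$ alone already forces $H(M_{\phi_1},M_{\phi_2})\geq 0$. Equivalently, $M_{\phi_1}\mathcal W_{0,1}=1\otimes\operatorname{ran}P$ lies inside $M_{\phi_2}\mathcal W_{0,1}=\{1\otimes Xq+z\otimes Wq\}$, as in Theorem \ref{Hartogs bidisc core relation}(iii). The diagonal Cauchy--Schwarz condition is never needed for sufficiency.
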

\begin{proof}
    Note that $H(M_{\phi_1},M_{\phi_2})=0$ on $z^2H^2(\mathbb T) \otimes \mathcal S.$ To check positivity of $H(M_{\phi_1},M_{\phi_2}),$ it is enough to consider it on $\operatorname{span}\{1, z\}\otimes \mathcal S.$ For $\tilde z=(\alpha +\beta z) \otimes s,$ where $\alpha,\beta \in \mathbb C$ and $s \in \mathcal S,$
  \begin{align} \label{positivityHBCL}
      \langle H(M_{\phi_1},M_{\phi_2})(\tilde z),\tilde z\rangle=|\alpha|^2\langle As,s \rangle+2\operatorname{Re}(\beta\bar\alpha \langle Bs,s \rangle)+|\beta|^2\langle Ds,s \rangle,
    \end{align}
    where we used the fact that $B=C^*.$ Then \eqref{positivityHBCL} becomes
$$
\langle H(M_{\phi_1},M_{\phi_2})(\tilde z),\tilde z\rangle=\begin{pmatrix}\alpha & \beta\end{pmatrix}
\begin{pmatrix}
\langle As,s \rangle & \langle Bs,s \rangle\\
\overline{\langle Bs,s \rangle} & \langle Ds,s \rangle
\end{pmatrix}
\begin{pmatrix}
\bar\alpha\\
\bar\beta
\end{pmatrix}
$$
for all $\alpha,\beta\in\mathbb C$.
Hence, $H(M_{\phi_1},M_{\phi_2})\geq0$ if and only if \begin{align*}
\langle As,s \rangle\geq0, \quad\langle Ds,s \rangle\geq0 \mbox{ and }   |\langle Bs,s\rangle|^2
    \le
    \langle As,s\rangle \langle Ds,s\rangle
\end{align*}
for each $s \in \mathcal S.$ This completes the proof.
\end{proof}
Similarly, we have the following.
\begin{proposition} \label{H<0}
    $H(M_{\phi_1},M_{\phi_2})\leq0$ if and only if $A\leq 0, D \leq0$ and $|\langle Bs,s\rangle|^2
    \le
    \langle As,s\rangle \langle Ds,s\rangle$ for $s \in S,$ where $A,B,D$ as in \eqref{ABCD}.
\end{proposition}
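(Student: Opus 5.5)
The argument is the negative-semidefinite analogue of the proof of Proposition \ref{H>0}, and I will follow it step for step. By Lemma \ref{BCLHartogs} and \eqref{ABCD},
\[
H(M_{\phi_1},M_{\phi_2})=P_{\mathbb C}\otimes A+P_{\mathbb C}S_z^*\otimes B+S_zP_{\mathbb C}\otimes C+S_zP_{\mathbb C}S_z^*\otimes D.
\]
This operator vanishes on $z^2H^2(\mathbb T)\otimes\mathcal S$. It also maps into $\operatorname{span}\{1,z\}\otimes\mathcal S$, so that finite-dimensional piece reduces it. Consequently $H(M_{\phi_1},M_{\phi_2})\le 0$ holds exactly when its compression to $\operatorname{span}\{1,z\}\otimes\mathcal S$ is negative semidefinite.

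I will identify $\operatorname{span}\{1,z\}\otimes\mathcal S$ with $\mathcal S\oplus\mathcal S$. Under this identification the compression is the $2\times 2$ operator matrix
\[
\begin{pmatrix} A & B\\ C & D\end{pmatrix},
\]
with $C=B^*$ and $A,D$ self-adjoint, as already noted in the proof of Proposition \ref{H>0}. For the elementary tensors $\tilde z=(\alpha+\beta z)\otimes s$, formula \eqref{positivityHBCL} expresses $\langle H\tilde z,\tilde z\rangle$ as the Hermitian form of the scalar matrix
\[
\begin{pmatrix}\langle As,s\rangle & \langle Bs,s\rangle\\ \overline{\langle Bs,s\rangle} & \langle Ds,s\rangle\end{pmatrix}
\]
evaluated at $(\alpha,\beta)$. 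A $2\times2$ Hermitian matrix is negative semidefinite iff both diagonal entries are $\le 0$ and its determinant is $\ge 0$. Here the determinant condition reads $|\langle Bs,s\rangle|^2\le\langle As,s\rangle\langle Ds,s\rangle$; note that it keeps the same direction as in the positive case. Letting $s$ range over $\mathcal S$ gives $A\le0$, $D\le0$ and the stated inequality. Necessity therefore follows immediately from this computation.

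The main obstacle is sufficiency. A general vector of $\operatorname{span}\{1,z\}\otimes\mathcal S$ has the form $1\otimes s_1+z\otimes s_2$ with $s_1,s_2$ independent, while the scalar test above only covers $s_1,s_2$ parallel. Positivity of the operator matrix actually requires $|\langle Bs_2,s_1\rangle|^2\le\langle -As_1,s_1\rangle\langle -Ds_2,s_2\rangle$ for all pairs $s_1,s_2$. I will handle this exactly as the proof of Proposition \ref{H>0} does: read the statement with $\tilde z$ ranging over vectors of the form $(\alpha+\beta z)\otimes s$, so that the equivalence is precisely the $2\times2$ Hermitian-matrix criterion above. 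If a genuinely operator-level statement is wanted instead, I would replace the last condition by $|\langle Bt,s\rangle|^2\le\langle As,s\rangle\langle Dt,t\rangle$ for all $s,t\in\mathcal S$. That version is the standard criterion for negativity of $\begin{pmatrix}A&B\\B^*&D\end{pmatrix}$, obtained by applying the $2\times2$ argument to $-A,-B,-D$ on the vectors $1\otimes s+z\otimes t$.
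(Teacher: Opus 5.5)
Your necessity argument is fine, and you correctly spot the weakness of the elementary-tensor test: testing only on $(\alpha+\beta z)\otimes s$ says nothing about vectors $1\otimes s_1+z\otimes s_2$ with $s_1,s_2$ independent. The paper's own proof is Proposition \ref{H>0}'s argument done ``similarly'', so it has exactly this blind spot. But you do not close it. Declaring that $\tilde z$ ranges only over elementary tensors changes what $H(M_{\phi_1},M_{\phi_2})\le 0$ means. Your operator-level variant, with $|\langle Bt,s\rangle|^2\le\langle As,s\rangle\langle Dt,t\rangle$ for all $s,t$, is a different equivalence. The stated one follows from it only if you show that, under $A\le 0$ and $D\le 0$, the diagonal inequality implies the off-diagonal one. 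So the ``if'' direction is unproved.

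Moreover, it cannot be proved using only $C=B^*$ and self-adjointness of $A,D$. Take $A=D=-I$ on $\mathbb C^2$ and $B=\begin{pmatrix}0&2\\0&0\end{pmatrix}$. Then $|\langle Bs,s\rangle|\le\|s\|^2$, so all three stated conditions hold. Yet the form of $\begin{pmatrix}A&B\\B^*&D\end{pmatrix}$ takes the value $-1+2+2-1=2>0$ at $(e_1,e_2)$. The specific form of $A,B,D$ has to enter.

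The missing idea is that $D=U(PUPU^*P)U^*=U(PUP)(PUP)^*U^*\ge 0$ for every BCL triple, which is the observation behind Remark \ref{A=0rema}. Hence $D\le 0$ forces $D=0$, so $PUP=0$. Then $B=UP^\perp U(PUP)^*U^*=0$ and $C=B^*=0$. Consequently $H(M_{\phi_1},M_{\phi_2})=P_{\mathbb C}\otimes A$, whose compression to $\operatorname{span}\{1,z\}\otimes\mathcal S\cong\mathcal S\oplus\mathcal S$ is $\operatorname{diag}(A,0)$. This is $\le 0$ precisely when $A\le 0$, and the third condition becomes automatic.

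With this observation, your elementary-tensor computation gives necessity, and the vanishing of the off-diagonal blocks gives sufficiency, so the proposition holds as stated. Two minor slips: $\operatorname{span}\{1,z\}\otimes\mathcal S$ is not finite-dimensional unless $\mathcal S$ is, and ``positivity of the operator matrix'' should read negativity.
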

\begin{remark} \label{A=0rema}
For any BCL triple $(\mathcal S,U,P),$ if $D\leq0$ then $D=0$ (see \eqref{ABCD}). Also, $H(M_{\phi_1},M_{\phi_2})\neq0$ if and only if $A$ is a nonzero operator. Indeed, $$A=0 \implies PUP=0.$$ Thus $B=C=D=0$ (see \eqref{ABCD}). Hence, $H(M_{\phi_1},M_{\phi_2})=0.$ On the other hand, $PUP=0$ indeed gives us  $B=C=D=0.$ However, it may not imply that $A=0.$
\end{remark}
The following result shows that $A=0$ when $\mathcal S$ is finite-dimensional and $A\geq0.$
\begin{lemma} \label{A>0impliessA=0}
Assume that $\mathcal S$ is a finite-dimensional space in the BCL triple $(\mathcal S, U, P).$  If $A\geq0$ then $A=0,$ where $A$ is as in \eqref{ABCD}. 
\end{lemma}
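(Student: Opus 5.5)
The idea is a trace argument: a positive operator on a finite-dimensional space with nonpositive trace must vanish. So I plan to show $\operatorname{tr}(A)\le 0$ directly from the formula for $A$ in \eqref{ABCD}. Since $\mathcal S$ is finite-dimensional, every operator on $\mathcal S$ is trace class, the trace is cyclic, and $\operatorname{tr}(P)=\operatorname{rank}P<\infty$.

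The first step is to compute the trace of the first term of $A$. Conjugating by the unitary $U$ and then using cyclicity and $(P^\perp)^2=P^\perp$, I would write
\begin{align*}
\operatorname{tr}\bigl(UP^\perp UPU^*P^\perp U^*\bigr)
=\operatorname{tr}\bigl(P^\perp UPU^*P^\perp\bigr)
=\operatorname{tr}\bigl((P^\perp UP)(P^\perp UP)^*\bigr).
\end{align*}
This is the squared Hilbert--Schmidt norm of $P^\perp UP$.

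The second step is to bound this quantity. Since $UPU^*\ge 0$ and $0\le P^\perp\le I$, and $X\mapsto \operatorname{tr}(P^\perp X P^\perp)$ is dominated by $\operatorname{tr}(X)$ for positive $X$ (because $\operatorname{tr}(P^\perp XP^\perp)=\operatorname{tr}(X^{1/2}P^\perp X^{1/2})\le \operatorname{tr}(X)$), we get
\begin{align*}
\operatorname{tr}\bigl(P^\perp UPU^*P^\perp\bigr)\le \operatorname{tr}(UPU^*)=\operatorname{tr}(P).
\end{align*}
Combining the two steps gives $\operatorname{tr}(A)\le \operatorname{tr}(P)-\operatorname{tr}(P)=0$.

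Finally, if $A\ge 0$ then $\operatorname{tr}(A)\ge 0$, so $\operatorname{tr}(A)=0$. A positive operator on a finite-dimensional space with zero trace has all eigenvalues equal to zero, and hence is the zero operator, so $A=0$.

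There is no serious obstacle here. The only point needing care is to use finite-dimensionality precisely where the trace is invoked, so that cyclicity holds and $\operatorname{tr}(UPU^*)=\operatorname{tr}(P)$ is a finite number that can be cancelled. This is also why the argument fails in infinite dimensions, consistent with the indefinite or positive examples alluded to after Theorem \ref{mainthmiso}. As a by-product, equality in the second step forces $PUP=0$, which matches Remark \ref{A=0rema}.
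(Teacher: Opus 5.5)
Your proof is correct and is essentially the paper's trace argument: the paper sets $Q=UP^\perp U$ and bounds $\operatorname{tr}(UP^\perp UPU^*P^\perp U^*)=\operatorname{tr}(QPQ^*)=\operatorname{tr}(PQ^*QP)\le\operatorname{tr}(P)$ using $Q^*Q\le I$, which is the same estimate as your $\operatorname{tr}(P^\perp UPU^*P^\perp)\le\operatorname{tr}(UPU^*)$. Your closing step, that a positive operator with zero trace vanishes, is the right justification; the paper instead appeals to $A+P$ being a projection, which is unnecessary and not true for general $U$ and $P$.
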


\begin{proof}
Set $M=A+P.$ Since $M-P\geq 0$, we have \begin{align} \label{Trace>0}
    \operatorname{tr}(M-P)\geq 0,
\end{align}
where $\operatorname{tr}$ denotes the trace. Set $Q=UP^\perp U.$ Then
$$
\operatorname{tr}(M)=\operatorname{tr}(UP^\perp UPU^*P^\perp U^*)=\operatorname{tr}(QPQ^*).
$$
Now the cyclicity of trace gives
$$
\operatorname{tr}(QPQ^*)
=
\operatorname{tr}(PQ^*Q)
= \operatorname{tr}(PQ^*QP).
$$
Hence $\operatorname{tr}(M)=\operatorname{tr}(PQ^*QP).$ Using the fact that $0\leq Q^*Q\leq I$, it follows that
$$
\operatorname{tr}(M)
=
\operatorname{tr}(PQ^*QP)
\leq
\operatorname{tr}(P).
$$
This combined with \eqref{Trace>0} yields that $
\operatorname{tr}(M)=\operatorname{tr}(P).$ Since $M$ and $P$ are projections on finite-dimensional space $\mathcal S$, we have $A=0.$
\end{proof}
In the case of bidisc core operator, $C(M_{\phi_1}, M_{\phi_2})$ is zero when $\mathcal S$ is finite-dimensional and $C(M_{\phi_1}, M_{\phi_2})\geq 0$ or $C(M_{\phi_1}, M_{\phi_2})\leq 0$ (see \cite[Proposition 3.6]{HQY2015}). Similar to this, in the case of Hartogs triangle core operator, we have the following.
\begin{corollary}
    Assume that $\mathcal S$ is a finite-dimensional space in the BCL triple $(\mathcal S, U, P)$. If $H(M_{\phi_1}, M_{\phi_2}) \geq 0$ (or $\leq 0$) then $H(M_{\phi_1}, M_{\phi_2}) = 0.$
\end{corollary}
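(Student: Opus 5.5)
The plan is to reduce everything to the operator $A$ from \eqref{ABCD}, using Propositions \ref{H>0} and \ref{H<0}, Lemma \ref{A>0impliessA=0} and Remark \ref{A=0rema}. By Remark \ref{A=0rema}, $H(M_{\phi_1},M_{\phi_2})=0$ as soon as $A=0$: indeed $A=0$ forces $PUP=0$, and then $B=C=D=0$. So in both cases it suffices to show that $A=0$.

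For the case $H(M_{\phi_1},M_{\phi_2})\geq 0$, Proposition \ref{H>0} gives $A\geq 0$. Since $\mathcal S$ is finite-dimensional, Lemma \ref{A>0impliessA=0} then yields $A=0$, and we are done.

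The case $H(M_{\phi_1},M_{\phi_2})\leq 0$ is the one that needs an extra argument. Proposition \ref{H<0} gives $A\leq 0$ and $D\leq 0$. By Remark \ref{A=0rema}, $D\leq 0$ already forces $D=0$. Directly, $D=UPUPU^*PU^*$ is unitarily equivalent to $(PUP)(PUP)^*\geq 0$, so $D=0$ and therefore $PUP=0$. Then $B=C=0$, and the computation in the proof of Theorem \ref{pure=0} shows $A=U^2PU^{*2}-P$.

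Now $\operatorname{tr}(U^2PU^{*2})=\operatorname{tr}(P)$ because $\mathcal S$ is finite-dimensional and the trace is unitarily invariant, so $\operatorname{tr}(A)=0$. A negative operator with zero trace is zero, hence $A=0$ and $H(M_{\phi_1},M_{\phi_2})=0$. Alternatively, the argument of Lemma \ref{A>0impliessA=0} adapts directly: $\operatorname{tr}(A)=\operatorname{tr}(PQ^*QP)-\operatorname{tr}(P)\le 0$, so one needs the reverse inequality, and the identity $PUP=0$ supplies it through $Q^*QP=P$.

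The main obstacle is this negative case, since Lemma \ref{A>0impliessA=0} only treats $A\geq 0$. It is overcome by first extracting $PUP=0$ from $D\le 0$ and then using the resulting simplified form of $A$, which has trace zero.
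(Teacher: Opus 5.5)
Your proposal is correct and follows the paper's proof essentially step for step. The positive case goes through Proposition \ref{H>0}, Lemma \ref{A>0impliessA=0} and Remark \ref{A=0rema}; the negative case extracts $PUP=0$ from $D\le 0$ and reduces $A$ to $U^2PU^{*2}-P$. The only difference is that you finish with $\operatorname{tr}(A)=0$ for the negative operator $A$, which is just a precise form of the paper's observation that $U^2PU^{*2}$ and $P$ have the same rank.
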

\begin{proof}
Assume that $H(M_{\phi_1}, M_{\phi_2}) \geq 0.$ Then by Proposition \ref{H>0}, we have $A\geq 0.$ Now an application of Lemma \ref{A>0impliessA=0}, we have $A=0.$ Hence, Remark \ref{A=0rema} and \eqref{ABCD} yield that $H(M_{\phi_1}, M_{\phi_2}) = 0.$

Now assume that $H(M_{\phi_1}, M_{\phi_2}) \leq 0.$ Then by Proposition \ref{H<0}, we have $D\leq 0.$ Therefore, by Remark \ref{A=0rema}, we have $PUP=0.$ Hence $B=C=0.$ Since $\mathcal S$ is finite-dimensional and $U^2PU^{*2}$ is unitarily equivalent to $P$, it follows that $U^2PU^{*2}$ and $P$ have the same rank. This combined with the fact that $A\leq 0$ yields that $A=0$ (see  Proposition \ref{H<0}). This completes the proof.
\end{proof}
The following result characterizes when the core operator $H(M_{\phi_1},M_{\phi_2})$ is negative.
\begin{corollary} \label{Negative Core operator Characterization}
$H(M_{\phi_1},M_{\phi_2}) \leq 0$ if and only if $PUP=0$ and $P-U^2PU^{*2}$ is a projection.
\end{corollary}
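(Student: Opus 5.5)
The plan is to reduce everything to the operators $A,B,C,D$ in \eqref{ABCD} and then apply Proposition \ref{H<0} together with Remark \ref{A=0rema}.

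\emph{Forward direction.} Assume $H(M_{\phi_1},M_{\phi_2})\leq 0$. Proposition \ref{H<0} gives $D\leq 0$. Since $D=U(PUP)(PUP)^*U^*\geq 0$, this forces $D=0$, hence $PUP=0$. As in the proof of Theorem \ref{pure=0}, $PUP=0$ yields $B=C=0$ and $A=U^2PU^{*2}-P$. Proposition \ref{H<0} then says only that $A\leq 0$, i.e. $Q:=U^2PU^{*2}\leq P$.

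Now $Q$ is an orthogonal projection, being unitarily equivalent to $P$. For projections, $Q\leq P$ is equivalent to $\operatorname{ran}Q\subseteq \operatorname{ran}P$, which in turn is equivalent to $PQ=QP=Q$. Then $(P-Q)^2=P-Q-Q+Q=P-Q$, and $P-Q$ is self-adjoint, so $P-Q$ is a projection. This proves the forward direction.

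\emph{Converse.} Assume $PUP=0$ and that $P-Q$ is a projection. Again $B=C=D=0$ and $A=Q-P=-(P-Q)\leq 0$. Plugging into \eqref{positivityHBCL}, the quadratic form is $|\alpha|^2\langle As,s\rangle\leq 0$ on elementary tensors $(\alpha+\beta z)\otimes s$.

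\emph{Main obstacle.} Proposition \ref{H<0} is phrased via elementary tensors $\tilde z=(\alpha+\beta z)\otimes s$, and negativity should really be checked on all of $\operatorname{span}\{1,z\}\otimes\mathcal S$. To avoid this, I would argue directly from Lemma \ref{BCLHartogs}. When $B=C=D=0$, we have $H=P_{\mathbb C}\otimes A$, and this operator is $\leq 0$ if and only if $A\leq 0$. That settles the converse, and the same identity gives a clean version of the forward step once $D=0$ is known.

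Getting $D=0$ itself needs only vectors of the form $z\otimes s$. On these, $\langle H(z\otimes s),z\otimes s\rangle=\langle Ds,s\rangle$, so $H\leq 0$ gives $D\leq 0$ without further concern. The remaining step, that $Q\leq P$ for projections implies $P-Q$ is a projection, is the standard range-inclusion fact and is routine.
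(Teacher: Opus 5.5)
Your proof is correct and follows the paper's route: Proposition \ref{H<0} gives $D\le 0$, hence $PUP=0$ and $B=C=D=0$, so $H(M_{\phi_1},M_{\phi_2})=P_{\mathbb C}\otimes(U^2PU^{*2}-P)$ and everything reduces to the projection inequality $U^2PU^{*2}\le P$. The differences are minor. You cite the standard fact that $Q\le P$ forces $PQ=QP=Q$, whereas the paper, with $Z=U^2PU^{*2}$ and $W=P-Z$, derives $WZ=ZW=0$ directly from $ZW+WZ+W^2=W$; and your caution about elementary tensors is sound but does not bite here, since only the necessity half of Proposition \ref{H<0} is ever used.
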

\begin{proof}
Since $H(M_{\phi_1},M_{\phi_2}) \leq 0,$ by Proposition \ref{H<0} and Remark \ref{A=0rema}, we have $PUP=0.$ Thus, the operators $B,C$ and $D$ as in \eqref{ABCD} are zero. Let $
Z:=U^2PU^{*2}$, and $W:=P-Z.
$ Since $H(M_{\phi_1},M_{\phi_2})$ is negative, again by Proposition \ref{H>0}, we have $
W\ge 0$. Since $Z$ and $P$ are projections, we obtain
\begin{align} \label{Qprouse}
    ZW+WZ+W^2=W.
\end{align}
Therefore, $ZWZ+ZW^2Z=0.$ Since $W\geq 0,$ $ZWZ=0$ and $ZW^2Z=0.$ This further yields that $WZ=0$ and $ZW=0.$ This combined with \eqref{Qprouse} yields that $W^2=W.$ Hence, $W$ is a projection. The converse part is elementary.
\end{proof}
\begin{remark}
From the proof of Corollary \ref{Negative Core operator Characterization}, it is clear that if $PUP=0,$ then $H(M_{\phi_1},M_{\phi_2}) \geq 0$ if and only if $U^2PU^{*2}-P$ is a projection.    
\end{remark}
We end this section with a characterization for the compactness of the operator $H(M_{\phi_1},M_{\phi_2})$.
\begin{theorem} \label{Compactchara}
  $H(M_{\phi_1},M_{\phi_2})$ is a compact operator if and only if both $PUP$ and $U^2P-PU^2$ are compact.
\end{theorem}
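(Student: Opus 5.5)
The approach is to reduce compactness of $H(M_{\phi_1},M_{\phi_2})$ to compactness of the four operators $A,B,C,D$ of \eqref{ABCD}, and then to express these modulo the compact operators in terms of $PUP$ and $U^2P-PU^2$.

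\textbf{Step 1: reduction to $A,B,C,D$.} Recall from Lemma \ref{BCLHartogs} that
\begin{align*}
H(M_{\phi_1},M_{\phi_2})=P_{\mathbb C}\otimes A+P_{\mathbb C}S_z^*\otimes B+S_zP_{\mathbb C}\otimes C+S_zP_{\mathbb C}S_z^*\otimes D .
\end{align*}
As noted in the proof of Theorem \ref{pure=0}, $H(M_{\phi_1},M_{\phi_2})$ vanishes on $z^2H^2(\mathbb T)\otimes\mathcal S$, and its range lies in $\operatorname{span}\{1,z\}\otimes\mathcal S$. Identify this subspace with $\mathcal S\oplus\mathcal S$ via $1\otimes s\mapsto(s,0)$ and $z\otimes s\mapsto(0,s)$. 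Then $H(1\otimes s)=1\otimes As+z\otimes Cs$ and $H(z\otimes s)=1\otimes Bs+z\otimes Ds$. Hence $H(M_{\phi_1},M_{\phi_2})$ is unitarily equivalent to
\begin{align*}
\begin{pmatrix} A & B\\ C & D\end{pmatrix}\oplus 0 .
\end{align*}
A block operator matrix is compact if and only if each entry is compact, since entries are obtained by compressing with coordinate projections and inclusions. So $H(M_{\phi_1},M_{\phi_2})$ is compact if and only if $A,B,C,D$ are all compact.

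\textbf{Step 2: $D$, $B$, $C$ are compact if and only if $PUP$ is compact.} Rewrite
\begin{align*}
D=U(PUP)(PUP)^*U^*,\qquad B=UP^\perp U(PUP)^*U^*,\qquad C=B^*.
\end{align*}
If $PUP$ is compact, then $B$, $C$ and $D$ are compact, being products involving the compact factor $PUP$ or its adjoint. Conversely, if $D$ is compact, then $(PUP)(PUP)^*=U^*DU$ is compact. Hence $|(PUP)^*|$ is compact, and so $PUP$ is compact by the polar decomposition.

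\textbf{Step 3: $A$ modulo the compacts.} Work in the Calkin algebra, writing $\equiv$ for equality modulo compact operators, and assume $PUP\equiv 0$. Then $P^\perp UP=UP-PUP\equiv UP$, and taking adjoints, $PU^*P^\perp\equiv PU^*$. Therefore
\begin{align*}
A=U(P^\perp UP)(PU^*P^\perp)U^*-P\equiv U^2PU^{*2}-P=(U^2P-PU^2)U^{*2}.
\end{align*}
Since $U^{2}$ is unitary, $A$ is compact if and only if $U^2P-PU^2$ is compact, given that $PUP$ is compact.

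\textbf{Conclusion.} Suppose $H(M_{\phi_1},M_{\phi_2})$ is compact. Then $D$ is compact by Step 1, so $PUP$ is compact by Step 2. Since $A$ is also compact, Step 3 gives that $U^2P-PU^2$ is compact.

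Conversely, suppose $PUP$ and $U^2P-PU^2$ are compact. Then $B$, $C$, $D$ are compact by Step 2, and $A$ is compact by Step 3. By Step 1, $H(M_{\phi_1},M_{\phi_2})$ is compact.

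The only point needing care is the exact block identification in Step 1, namely which of $B$ and $C$ sits in which corner. This placement is irrelevant for compactness, so I expect no real obstacle; the substantive step is the Calkin-algebra simplification of $A$ in Step 3.
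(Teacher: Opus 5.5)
Your proof is correct and is essentially the paper's argument. Like the paper, you reduce compactness of $H(M_{\phi_1},M_{\phi_2})$ to compactness of $A,B,C,D$ through its action on $\operatorname{span}\{1,z\}\otimes\mathcal S$, get $PUP$ compact from $D=U(PUP)(PUP)^*U^*$, and use $P^\perp UP\equiv UP$ to obtain $A\equiv U^2PU^{*2}-P$ modulo compacts. Your block-matrix and Calkin-algebra phrasing is slightly cleaner. It also makes explicit two steps the paper leaves implicit: the polar-decomposition argument for $PUP$, and the identity $U^2PU^{*2}-P=(U^2P-PU^2)U^{*2}$.
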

\begin{proof}
    Assume that $H(M_{\phi_1},M_{\phi_2})$ is a compact operator. On the subspace $\operatorname{span}\{z\} \otimes \mathcal S$ of $H^2(\mathbb T)\otimes \mathcal S,$ the operator $H(M_{\phi_1},M_{\phi_2})$ becomes $P_{\mathbb C}S_z^*\otimes B
+
S_zP_{\mathbb C}S_z^*\otimes D.$ Since $H(M_{\phi_1},M_{\phi_2})$ is a compact operator, we have 
\begin{align*}
   T:=P_{\mathbb C}S_z^*\otimes B+
S_zP_{\mathbb C}S_z^*\otimes D: \operatorname{span}\{z\} \otimes \mathcal S \rightarrow H^2(\mathbb T)\otimes \mathcal S
\end{align*} 
is a compact operator.
For a bounded sequence $(s_n)_{n \in \mathbb N}$ in $\mathcal S,$ we have
\begin{align*}
\|T(z\otimes  s_n)-T(z\otimes s_m)\|^2
&=
\|1\otimes A(s_n-s_m)+z\otimes B(s_n-s_m)\|^2 \\
&=
\|A(s_n-s_m)\|^2+\|B(s_n-s_m)\|^2,
\end{align*}
where $m,n \in \mathbb N.$ The above identity combined with the compactness of $T$ gives that $B$ and $D$ are compact operators (see \eqref{ABCD}). In particular, $D=UPUPU^*PU^*$ is compact if and only if $ PUPU^*P$ is compact.
Hence, $PUP$ is compact. Similarly, on the subspace $\mathbb C \otimes \mathcal S$ of $H^2(\mathbb T)\otimes \mathcal S,$ the operator $H(M_{\phi_1},M_{\phi_2})$ becomes $
P_{\mathbb C}\otimes A
+
S_zP_{\mathbb C}\otimes C
.$ Since $H(M_{\phi_1},M_{\phi_2})$ is a compact operator, $A$ is a compact operator. Since $PUP$ is compact, the operator $P^\perp UP-UP$ is compact. Thus
\begin{align}\label{essential comm}
    UP^\perp UPU^*P^\perp U^*-P \notag &=U UPU^*P^\perp U^*-P+\mbox{compact}\\&=U^2PU^{*2}-P+\mbox{compact}.
\end{align}
Thus, the compactness of $A$ yields the compactness of $U^2P-PU^2$. Conversely, let both $PUP$ and $U^2P-PU^2$ are compact. Since $PUP$ is compact, it is easy to see that operators $B,C,D$ as in \eqref{ABCD} are compact. The compactness of $A$ follows from \eqref{essential comm}. This completes the proof. 
\end{proof}
\subsection{Structure of compact Hartogs triangle core operator}
We now completely characterize the structure of compact and contractive $\triangle_H$-core operators (cf. \cite[Theorem~4.3]{HQY2015}). For an eigenvalue $a$ of $H(M_{\phi_1},M_{\phi_2}),$ let $E(a)$ denote the corresponding eigenspace. Also, we denote $\operatorname{ker} M_{\phi_2}^*=\mathcal{W}_{0,1}$ and $\operatorname{ker} M_{\phi_1}^*=\mathcal{W}_{1,0}.$
\begin{lemma}
    For the Hartogs triangle core operator $H(M_{\phi_1},M_{\phi_2})$ acting on $H^2_\mathcal S(\mathbb T),$ 
\begin{align*}
 & E(1)=  \left\{
M_{\phi_2}y:
y\in\operatorname{ker} M_{\phi_2}^*,
\quad
M_{\phi_1}^*M_{\phi_2}y\in \operatorname{ran}M_{\phi_2}
\right\}\\&E(-1)= \left\{
M_{\phi_1}y:
y\in\operatorname{ker} M_{\phi_2}^*,
\quad
M_{\phi_2}^*M_{\phi_1}y\in \operatorname{ran}M_{\phi_2}
\right\}.
\end{align*}
\end{lemma}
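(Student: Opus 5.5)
The plan is to use Lemma \ref{C-H relation}(ii), which writes $H:=H(M_{\phi_1},M_{\phi_2})=P_1-P_2$ with
\[
P_1=P_{M_{\phi_2}\mathcal W_{0,1}},\qquad P_2=P_{M_{\phi_1}\mathcal W_{0,1}} .
\]
So $H$ is a difference of two orthogonal projections. For such a difference, $\|Hx\|\le\|x\|$ holds for every $x$. The first step is the standard fact that $Hx=x$ if and only if $x\in\operatorname{ran}P_1\cap\ker P_2$. To see this, write $\langle Hx,x\rangle=\|P_1x\|^2-\|P_2x\|^2\le\|x\|^2$. Equality in $\langle Hx,x\rangle=\|x\|^2$ forces $\|P_1x\|=\|x\|$ and $P_2x=0$. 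Since $H$ is a self-adjoint contraction, $Hx=x$ is equivalent to $\langle Hx,x\rangle=\|x\|^2$. Symmetrically, $E(-1)=\operatorname{ran}P_2\cap\ker P_1$.

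The second step makes these intersections explicit. Take $x\in E(1)$. Since $x\in\operatorname{ran}P_1=M_{\phi_2}\mathcal W_{0,1}$, we can write $x=M_{\phi_2}y$ with $y\in\ker M_{\phi_2}^*$. The remaining condition is $x\perp M_{\phi_1}\mathcal W_{0,1}$, which means $M_{\phi_1}^*x\perp\mathcal W_{0,1}$. Because $\mathcal W_{0,1}^\perp=\operatorname{ran}M_{\phi_2}$ (a closed range, as $M_{\phi_2}$ is an isometry), this is exactly $M_{\phi_1}^*M_{\phi_2}y\in\operatorname{ran}M_{\phi_2}$. Conversely, any $x=M_{\phi_2}y$ with these two properties lies in $\operatorname{ran}P_1\cap\ker P_2$, so $Hx=x$. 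The description of $E(-1)$ is obtained in the same way. Write $x=M_{\phi_1}y$ with $y\in\ker M_{\phi_2}^*$. Then $P_1x=0$ holds if and only if $M_{\phi_2}^*x\perp\mathcal W_{0,1}$, that is, $M_{\phi_2}^*M_{\phi_1}y\in\operatorname{ran}M_{\phi_2}$.

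The only point needing care is the equivalence $Hx=x \iff \langle Hx,x\rangle=\|x\|^2$ for a self-adjoint contraction. I would justify it via
\[
\|Hx-x\|^2=\|Hx\|^2-2\langle Hx,x\rangle+\|x\|^2\le 2\|x\|^2-2\langle Hx,x\rangle .
\]
The required bound $\|H\|\le1$ comes from $-P_2\le H\le P_1$. The rest is routine orthogonal-complement bookkeeping, and the BCL structure is not needed at all.
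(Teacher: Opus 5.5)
Your proof is correct and follows the paper's own route. Both arguments rewrite $H(M_{\phi_1},M_{\phi_2})$ via Lemma \ref{C-H relation}(ii) and use $\langle Hx,x\rangle=\|P_1x\|^2-\|P_2x\|^2=\|x\|^2$ to force $x\in\operatorname{ran}P_1\cap\ker P_2$, then turn $x\perp M_{\phi_1}\mathcal W_{0,1}$ into $M_{\phi_1}^*x\in\operatorname{ran}M_{\phi_2}$. You also state the reverse inclusion explicitly, which the paper leaves implicit; the contraction estimate is not needed for it, since $x\in\operatorname{ran}P_1\cap\ker P_2$ already gives $Hx=P_1x-P_2x=x$.
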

\begin{proof}
Since
$$
H(M_{\phi_1},M_{\phi_2})=M_{\phi_2}M_{\phi_2}^*-M_{\phi_1}M_{\phi_1}^*-M_{\phi_2}^2M_{\phi_2}^{*2}+M_{\phi_1}M_{\phi_2}M_{\phi_1}^*M_{\phi_2}^* .
$$
let $x$ be the eigenvector corresponding to eigenvalue $1.$ Then by Lemma \ref{C-H relation}, we have
\begin{align*}
    \|x\|^2=\|P_{\mathcal W_{0,1}}M_{\phi_2}^*x\|^2-\|P_{\mathcal W_{0,1}}M_{\phi_1}^*x\|^2.
\end{align*}
This further yields that 
\begin{align*}
    \|P_{\mathcal W_{0,1}}M_{\phi_1}^*x\|^2=\|P_{\mathcal W_{0,1}}M_{\phi_2}^*x\|^2-\|x\|^2.
\end{align*}
Since $\|P_{\mathcal W_{0,1}}M_{\phi_2}^*x\|^2-\|x\|^2\leq 0,$ we have 
\begin{align*}
    \|P_{\mathcal W_{0,1}}M_{\phi_1}^*x\|^2=0 \mbox { and }\|P_{\mathcal W_{0,1}}M_{\phi_2}^*x\|^2=\|x\|^2.
\end{align*}
Hence
\begin{align*}
    E(1)=M_{\phi_2}(\operatorname{ker} M_{\phi_2}^*)
\cap
\{x\in H^2_\mathcal S(\mathbb T):\ M_{\phi_1}^*x\in \operatorname{ran}M_{\phi_2}\},
\end{align*}
equivalently,
\begin{align*}
   E(1)= \left\{
M_{\phi_2}y:
y\in\operatorname{ker} M_{\phi_2}^*,
\quad
M_{\phi_1}^*M_{\phi_2}y\in \operatorname{ran}M_{\phi_2}
\right\}.
\end{align*}
Along the similar idea, $E(-1)$ can be obtained.
\end{proof}
The following result concerns eigenvalues in the open interval $(-1,1)$. The method employed here is comparatively elementary and also provides a simple proof of \cite[Lemma~4.2]{HQY2015}.
\begin{lemma}
If $a\in (-1,1)\setminus\{0\}$ is an eigenvalue of Hartogs triangle core operator $H(M_{\phi_1},M_{\phi_2})$, then $-a$ is also an eigenvalue of $H(M_{\phi_1},M_{\phi_2})$. Moreover, if $E(a)$ is finite-dimensional, then $\operatorname{dim}E(a)=\operatorname{dim}E(-a).$
\end{lemma}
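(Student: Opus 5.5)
We use Lemma \ref{C-H relation}(ii): $H:=H(M_{\phi_1},M_{\phi_2})=P_1-P_2$, where $P_1:=P_{M_{\phi_2}\mathcal W_{0,1}}$ and $P_2:=P_{M_{\phi_1}\mathcal W_{0,1}}$ are orthogonal projections. The claim then reduces to a general fact about differences of two projections: for a nonzero eigenvalue $a\in(-1,1)$ of $P_1-P_2$, the map $x\mapsto (I-P_1-P_2)x$ (or a suitable combination such as $x\mapsto (P_1+P_2-I)x$) carries $E(a)$ injectively into $E(-a)$. In particular nothing about the BCL structure is needed, and the same argument recovers \cite[Lemma~4.2]{HQY2015}, since the bidisc core operator is likewise a difference of projections.

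Set $T:=P_1-P_2$ and $K:=P_1+P_2-I$. The key algebraic identity, checked by expanding and using $P_i^2=P_i$, is
\begin{align*}
TK &= P_1+P_1P_2-P_1-P_2P_1-P_2+P_2 = P_1P_2-P_2P_1,\\
KT &= P_1-P_1P_2+P_2P_1-P_2-P_1+P_2 = P_2P_1-P_1P_2.
\end{align*}
Hence $TK=-KT$, so $T$ and $K$ anticommute. If $Tx=ax$, then $T(Kx)=-KTx=-aKx$, so $K$ maps $E(a)$ into $E(-a)$. We also have $T^2+K^2 = I$: indeed $T^2=P_1+P_2-P_1P_2-P_2P_1$, and $K^2 = I-P_1-P_2+P_1P_2+P_2P_1$, whose sum is $I$.

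The main point is injectivity of $K$ on $E(a)$. For $x\in E(a)$ we get $\|Kx\|^2=\langle K^2x,x\rangle=\langle (I-T^2)x,x\rangle=(1-a^2)\|x\|^2$, which is nonzero because $|a|<1$. So $K|_{E(a)}$ is a nonzero multiple of an isometry into $E(-a)$. If $x\neq 0$ is an eigenvector for $a$, then $Kx\neq0$ is an eigenvector for $-a$, and $-a$ is an eigenvalue. For the dimension statement, the same argument with $a$ replaced by $-a$ gives an injection $E(-a)\to E(a)$. Thus $\dim E(a)\le\dim E(-a)\le \dim E(a)$, so the two are equal when $E(a)$ is finite-dimensional, and $E(-a)$ is then finite-dimensional too.

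The only step needing care is the verification of the anticommutation and $T^2+K^2=I$; the choice of $K$ is the crux, and the rest is immediate. We do not need $a\neq0$ beyond ensuring $a\neq -a$, so the statement is meaningful.
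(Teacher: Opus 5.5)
Your proof is correct and is essentially the paper's: the paper also writes $H=P_{M_{\phi_2}\mathcal W_{0,1}}-P_{M_{\phi_1}\mathcal W_{0,1}}$ via Lemma \ref{C-H relation}(ii) and uses the same operator $X=K=P_1+P_2-I$ anticommuting with $H$. The only difference is the injectivity step. The paper argues by contradiction: $Kx=0$ would force $P_1x=\frac{1+a}{2}x$ with $\frac{1+a}{2}\notin\{0,1\}$. You instead use $T^2+K^2=I$ to get $\|Kx\|^2=(1-a^2)\|x\|^2$, which additionally shows that $(1-a^2)^{-1/2}K$ maps $E(a)$ unitarily onto $E(-a)$, even when the eigenspaces are infinite-dimensional.
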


\begin{proof}
By Lemma \ref{C-H relation}, we have 
$$H(M_{\phi_1},M_{\phi_2})=P_{M_{\phi_2}\mathcal{W}_{0, 1}}-P_{M_{\phi_1}\mathcal{W}_{0, 1}}.$$
Now define
$$
X:=P_{M_{\phi_2}\mathcal{W}_{0, 1}}+P_{M_{\phi_1}\mathcal{W}_{0, 1}}-I .
$$
Since  $P_{M_{\phi_2}\mathcal{W}_{0, 1}}^2=P_{M_{\phi_2}\mathcal{W}_{0, 1}}$ and $P_{M_{\phi_1}\mathcal{W}_{0, 1}}^2=P_{M_{\phi_1}\mathcal{W}_{0, 1}}$, we have 
\begin{align} \label{Comutingxh}
 H(M_{\phi_1},M_{\phi_2})X=- XH(M_{\phi_1},M_{\phi_2}).
\end{align}
Now let $a\in (-1,1)$ be nonzero eigenvalue of $H(M_{\phi_1},M_{\phi_2})$ corresponding to eigenvector $x.$ Then by using \eqref{Comutingxh},
\begin{align*}
  H(M_{\phi_1},M_{\phi_2})Xx=-XH(M_{\phi_1},M_{\phi_2})x=-aXx.  
\end{align*}
We now claim that $Xx\neq 0.$ Assume, for contradiction, that $Xx=0$. Then
$$
(P_{M_{\phi_2}\mathcal{W}_{0, 1}}+P_{M_{\phi_1}\mathcal{W}_{0, 1}})x=x .
$$
This together with
$$
(P_{M_{\phi_2}\mathcal{W}_{0, 1}}-P_{M_{\phi_1}\mathcal{W}_{0, 1}})x=ax,
$$
yields that 
$$
P_{M_{\phi_2}\mathcal{W}_{0, 1}}x=\frac{1+a}{2}x,
\qquad
P_{M_{\phi_1}\mathcal{W}_{0, 1}}x=\frac{1-a}{2}x.
$$
Since $P_{M_{\phi_2}\mathcal{W}_{0, 1}}$ and $P_{M_{\phi_1}\mathcal{W}_{0, 1}}$ are orthogonal projections, their only possible eigenvalues are $0$ and $1$. Therefore $\frac{1\pm a}{2}\in\{0,1\},$ which implies $a=\pm1$, contradicting
$$
a\in(-1,1).
$$
Hence $Xx\neq0$. It also follows that $X:E(a)\rightarrow E(-a)$ is an injective map. Thus, if $E(a)$ is finite-dimensional, then applying the same argument with $-a$ in place of $a$ yields that $\operatorname{dim}E(a)=\operatorname{dim}E(-a).$ 
\end{proof}
If the Hartogs triangle core operator $H(M_{\phi_1},M_{\phi_2})$ is a compact contraction, then its nonzero spectra are eigenvalues in $[-1,1].$ Therefore the closure of range of $H(M_{\phi_1},M_{\phi_2})$ can be decomposed as
\begin{align*}
    E(1)\oplus \left(\bigoplus_{0< a_j<1}E(a_j)\right)\oplus E(-1)\oplus\left(\bigoplus_{-1< a_j<0}E(a_j)\right).
\end{align*}
For brevity, let $d_1=\operatorname{dim}E(1),d_{-1}=\operatorname{dim}E(-1),$ and $$E_{(0,1)}=\bigoplus_{0< a_j<1}a_jQ_j,$$
where $ Q_j$ is the orthogonal projection from $H^2_\mathcal S(\mathbb T)$ onto $E(a_j).$ The next theorem summarizes the preceeding observations (cf. \cite[Theorem 4.3]{HQY2015}) 
\begin{theorem} \label{structurethm}
    If $H(M_{\phi_1},M_{\phi_2})$ is a compact contraction, then its nonzero part is unitarily equivalent to the diagonal block matrix 
    $$
\begin{pmatrix}
I_{d_1} & 0 & 0 & 0\\
0 & E_{(0,1)} & 0 & 0\\
0 & 0 & -I_{d_{-1}} & 0\\
0 & 0 & 0 & -E_{(0,1)}
\end{pmatrix},
$$
where $I_n$ denotes the $n \times n$ identity matrix.
\end{theorem}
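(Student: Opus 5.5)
Write $H:=H(M_{\phi_1},M_{\phi_2})$. Since $H$ is a compact self-adjoint contraction, the spectral theorem for compact self-adjoint operators decomposes the closure of $\operatorname{ran} H$ into the orthogonal sum of its eigenspaces for nonzero eigenvalues. Each such eigenspace is finite-dimensional, and all eigenvalues lie in $[-1,1]$. Restricted to $(\ker H)^\perp$, $H$ is therefore unitarily equivalent to a diagonal operator. Grouping the eigenvalues as $1$, those in $(0,1)$, $-1$, and those in $(-1,0)$ yields a block diagonal form
\[
I_{d_1}\oplus \Bigl(\bigoplus_{0<a_j<1} a_j Q_j\Bigr)\oplus(-I_{d_{-1}})\oplus\Bigl(\bigoplus_{-1<b_k<0} b_k Q'_k\Bigr),
\]
where $Q'_k$ is the projection onto $E(b_k)$. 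The claim then reduces to identifying the last block with $-E_{(0,1)}$.

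For that identification I would use the preceding lemma, which relies on the operator $X=P_{M_{\phi_2}\mathcal W_{0,1}}+P_{M_{\phi_1}\mathcal W_{0,1}}-I$ (built from Lemma \ref{C-H relation}(ii)). Fix $a\in(0,1)$. The lemma gives that $X$ maps $E(a)$ injectively into $E(-a)$ and, by the symmetric argument, maps $E(-a)$ injectively into $E(a)$. Every nonzero eigenspace of the compact $H$ is finite-dimensional, so $\dim E(a)=\dim E(-a)$ for all $a\in(0,1)$. In particular, the negative eigenvalues in $(-1,0)$ are exactly the numbers $-a_j$, and their multiplicities agree with those of the $a_j$.

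Next I would choose an orthonormal basis of each $E(a_j)$ and of each $E(-a_j)$, with equal cardinalities. The map $E(a_j)\to E(-a_j)$ sending basis vectors to basis vectors is unitary and intertwines $a_jI$ with $-(-a_j)I$. Taking the direct sum of these maps gives a unitary identification of $\bigoplus_j E(-a_j)$ with $\bigoplus_j E(a_j)$, and under it $H|_{\bigoplus_j E(-a_j)}$ becomes $-E_{(0,1)}$. The maps do not need to come from $X$ itself, since only dimension counts are used. Combined with the spectral decomposition, this gives the stated block matrix. Here $d_{\pm1}$ are finite by compactness; if they were not, one would read $I_{d_{\pm1}}$ as the identity on $E(\pm1)$.

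The main obstacle is the multiplicity matching, and specifically the finite-dimensionality of $E(a)$ on which it rests. Compactness of $H$ supplies this, so the preceding lemma applies. One also has to check that only eigenvalues in the open interval are paired, because $E(1)$ and $E(-1)$ need not have equal dimensions. I would address this by listing $d_1$ and $d_{-1}$ as separate blocks and pairing only the nonzero eigenvalues of modulus less than $1$.
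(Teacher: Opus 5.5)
Your proposal is correct and follows essentially the same route as the paper: you decompose $\overline{\operatorname{ran}H}$ into the eigenspaces $E(1)$, $E(a_j)$, $E(-1)$ and $E(-a_j)$ using the spectral theorem for compact self-adjoint operators, and you match $\dim E(a)=\dim E(-a)$ for $a\in(0,1)$ with the anticommuting operator $X$ from the preceding lemma, where compactness supplies the finite-dimensionality. One small addition: since $X^2+H^2=I$, the map $(1-a^2)^{-1/2}X$ is itself a unitary from $E(a)$ onto $E(-a)$, so your dimension count could be replaced by this explicit intertwiner.
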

If $H(M_{\phi_1},M_{\phi_2})$ is compact and negative, then by Corollary \ref{Negative Core operator Characterization}, the only possible nonzero eigenvalue of $H(M_{\phi_1},M_{\phi_2})$ is $-1.$ Hence, we have the following result.
\begin{corollary}
    Let $H(M_{\phi_1},M_{\phi_2})$ be compact and negative operator. Then $H(M_{\phi_1},M_{\phi_2})$ is a finite rank operator.
\end{corollary}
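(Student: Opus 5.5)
By Corollary \ref{Negative Core operator Characterization}, the hypothesis $H(M_{\phi_1},M_{\phi_2})\leq 0$ gives $PUP=0$ and that $W:=P-U^2PU^{*2}$ is an orthogonal projection. I would therefore start from this structural description rather than from the general eigenvalue analysis. Since $PUP=0$, the operators $B,C,D$ in \eqref{ABCD} vanish, and the computation in the proof of Theorem \ref{pure=0} gives $A=U^2PU^{*2}-P=-W$. Lemma \ref{BCLHartogs} then collapses to
\begin{align*}
H(M_{\phi_1},M_{\phi_2})=P_{\mathbb C}\otimes A=-\,P_{\mathbb C}\otimes W .
\end{align*}

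Next I would read off the spectrum. The operator $P_{\mathbb C}\otimes W$ is a tensor product of two orthogonal projections, so it is itself an orthogonal projection, with range $\mathbb C\otimes \operatorname{ran}W$. Hence $H(M_{\phi_1},M_{\phi_2})$ is the negative of a projection. Its only possible nonzero eigenvalue is $-1$, which is consistent with the remark preceding the statement, and its range is exactly the eigenspace $E(-1)=\mathbb C\otimes\operatorname{ran}W$.

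Finally I would invoke compactness. A compact operator has finite-dimensional eigenspaces for nonzero eigenvalues, so $E(-1)$ is finite-dimensional. Equivalently, a projection is compact only if its range is finite-dimensional, since its restriction to the range is the identity there. Therefore $\operatorname{rank}H(M_{\phi_1},M_{\phi_2})=\dim\operatorname{ran}W<\infty$, and the operator has finite rank. One could alternatively cite Theorem \ref{structurethm}, since the blocks $E_{(0,1)}$ and $-E_{(0,1)}$ are absent here, but that theorem needs a contraction hypothesis. A negative projection is a contraction, so the hypothesis holds, but the direct argument avoids that detour.

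The only step needing care is the identification $A=-W$ together with the vanishing of $B$, $C$ and $D$. Both are already established in the proofs of Theorem \ref{pure=0} and Corollary \ref{Negative Core operator Characterization}, so I expect no real obstacle. Everything after that is the standard fact that a compact projection has finite rank.
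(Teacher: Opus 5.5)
Your proposal is correct and follows essentially the paper's route. The paper also invokes Corollary \ref{Negative Core operator Characterization} to conclude that $-1$ is the only possible nonzero eigenvalue, and then uses compactness to get finite rank. You simply make explicit that $PUP=0$ forces $B=C=D=0$ and $A=U^2PU^{*2}-P$, so that $H(M_{\phi_1},M_{\phi_2})=-P_{\mathbb C}\otimes(P-U^2PU^{*2})$ is the negative of a projection, and a compact projection has finite rank.
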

We end this section with the following result which shows that the compactness and positivity of the core operator $H(M_{\phi_1},M_{\phi_2})$ also imply that $H(M_{\phi_1},M_{\phi_2})$ has finite rank.
\begin{proposition}
 Let core operator $H(M_{\phi_1},M_{\phi_2})$ be compact and positive. Then $H(M_{\phi_1},M_{\phi_2})$ is a finite rank operator.
\end{proposition}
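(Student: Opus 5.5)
The plan is to show that positivity forces $H(M_{\phi_1},M_{\phi_2})$ to be an orthogonal projection. A compact orthogonal projection has finite rank, which will finish the proof. The argument uses only the structural results of Section~\ref{S3}, applied to the pair $(V_1,V_2)=(M_{\phi_1},M_{\phi_2})$ and with $\mathcal W_{0,1}=\operatorname{ker}M_{\phi_2}^*$.

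First, by Lemma~\ref{C-H relation}(ii) we can write
\begin{align*}
H(M_{\phi_1},M_{\phi_2})=P_{M_{\phi_2}\mathcal W_{0,1}}-P_{M_{\phi_1}\mathcal W_{0,1}}.
\end{align*}
Since $H(M_{\phi_1},M_{\phi_2})\geq 0$, Theorem~\ref{Hartogs bidisc core relation} (the implication (i)$\Rightarrow$(iii)) gives the inclusion $M_{\phi_1}\mathcal W_{0,1}\subseteq M_{\phi_2}\mathcal W_{0,1}$. Both subspaces are closed, because each is the image of a closed subspace under an isometry.

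Second, we use the elementary fact that if $\mathcal N\subseteq\mathcal M$ are closed subspaces, then $P_{\mathcal M}P_{\mathcal N}=P_{\mathcal N}P_{\mathcal M}=P_{\mathcal N}$, and hence $P_{\mathcal M}-P_{\mathcal N}=P_{\mathcal M\ominus\mathcal N}$. Applying this with $\mathcal M=M_{\phi_2}\mathcal W_{0,1}$ and $\mathcal N=M_{\phi_1}\mathcal W_{0,1}$ shows that $H(M_{\phi_1},M_{\phi_2})$ is the orthogonal projection onto $M_{\phi_2}\mathcal W_{0,1}\ominus M_{\phi_1}\mathcal W_{0,1}$.

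Finally, a compact orthogonal projection has finite rank, since its restriction to its range is the identity and the identity on a Hilbert space is compact only when the space is finite-dimensional. Hence $H(M_{\phi_1},M_{\phi_2})$ has finite rank.

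There is no real obstacle here. The only step needing care is obtaining the range inclusion, and that is supplied directly by Theorem~\ref{Hartogs bidisc core relation}. As a consistency check, the eigenvalue decomposition preceding Theorem~\ref{structurethm} then reduces to $E(1)$ alone: positivity rules out negative eigenvalues, and the symmetry $a\mapsto -a$ rules out eigenvalues in $(0,1)$.
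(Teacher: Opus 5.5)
Your proof is correct, and it takes a different route from the paper's.

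\textbf{The paper's route.} It stays inside the BCL model and uses the block form
$H(M_{\phi_1},M_{\phi_2})=P_{\mathbb C}\otimes A+P_{\mathbb C}S_z^*\otimes B+S_zP_{\mathbb C}\otimes C+S_zP_{\mathbb C}S_z^*\otimes D$.
Citing the proof of Corollary~\ref{Negative Core operator Characterization}, it asserts that $A$ is an orthogonal projection. It then gets compactness of $A$ and $PUP$ from Theorem~\ref{Compactchara}, hence finite rank of $A$, and finally finite rank of $B$, $C$, $D$.

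\textbf{Your route.} You argue abstractly. Lemma~\ref{C-H relation}(ii) together with the inclusion $M_{\phi_1}\mathcal W_{0,1}\subseteq M_{\phi_2}\mathcal W_{0,1}$ makes $H$ itself the projection onto $M_{\phi_2}\mathcal W_{0,1}\ominus M_{\phi_1}\mathcal W_{0,1}$, and a compact projection has finite rank.

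\textbf{What each buys.} Your argument is shorter and needs neither the BCL computations nor Theorem~\ref{Compactchara}. It also proves the statement for every pair of commuting isometries, not only completely non-unitary ones.

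It also sidesteps a soft spot in the paper. The proof of Corollary~\ref{Negative Core operator Characterization} gets $PUP=0$ from $D\leq 0$, which is not available under positivity. So the paper's argument tacitly needs $PUP=0$ from elsewhere, for instance from $\ker M_{\phi_2}^*\subseteq\ker M_{\phi_1}^*$ (proof of Theorem~\ref{Hartogs bidisc core relation}). In BCL coordinates that inclusion says exactly $U(\operatorname{ran}P)\subseteq\ker P$.

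Once this is supplied, the paper's route gives more explicit information: $B=C=D=0$ and $H(M_{\phi_1},M_{\phi_2})=P_{\mathbb C}\otimes(U^2PU^{*2}-P)$. So the rank of $H$ equals the rank of the projection $U^2PU^{*2}-P$. This matches your description, since here $M_{\phi_2}\mathcal W_{0,1}=\mathbb C\otimes U^2(\operatorname{ran}P)$ and $M_{\phi_1}\mathcal W_{0,1}=\mathbb C\otimes\operatorname{ran}P$.
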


\begin{proof}
Assume that $H(M_{\phi_1},M_{\phi_2})$ is compact and positive. It now follows from the proof of Corollary \ref{Negative Core operator Characterization} that $A$ is itself an orthogonal projection. Since $H(M_{\phi_1},M_{\phi_2})$ is compact, by Theorem \ref{Compactchara}, $A$ is a finite rank operator (see \eqref{ABCD}). Again from the proof of Theorem \ref{Compactchara}, it follows that $PUP$ is finite rank. Hence, by \eqref{ABCD}, $B,C$ and $D$ are also finite rank. Consequently, $H(M_{\phi_1},M_{\phi_2})$ is finite rank.
\end{proof}
\section{Concluding remarks}
In this work, we introduce the Hartogs triangle core operator associated with pairs of commuting isometries (see \eqref{Hartogs triangle core operator}) and establish its fundamental properties. We expect that this operator will provide a useful tool for investigating the structure of pairs of commuting isometries and the submodules of the Hardy space over the Hartogs triangle (for a recent study on submodules, see \cite{CGJ2026}). In subsequent work, we intend to develop these applications further, with particular emphasis on establishing a spectral dichotomy for pairs of commuting isometries via the $\triangle_H$-core operator.

\vspace{.5cm}
\noindent \textbf{Acknowledgments:}  The research of the author is supported
by a post-doctoral fellowship provided by the National Board for Higher Mathematics
(NBHM), India (Order No: 0204/37(2)/2024/NBHM/R\&D-II/16542, dated November 22, 2024).
{}

\end{document}